\newtheorem{thm}{Theorem}[section]
\newtheorem{lem}[thm]{Lemma}
\newtheorem{qst}[thm]{Question}
\newtheorem{prop}[thm]{Proposition}
\newtheorem{rk}[thm]{Remark}
\theoremstyle{definition}
\newtheorem{df}[thm]{Definition}
  \newcommand{\NN}{\mathbb{N}}
\newcommand{\vphi}{\varphi}
\newcommand{\veps}{\varepsilon}
\newcommand{\ol}{\overline}
\newcommand{\from}{\colon}
\newcommand{\out}{\textup{Out}(F_r)}
\newcommand{\mA}{\mathcal{A}}
\newcommand{\mT}{\mathcal{T}}
\newcommand{\mS}{\mathcal{S}}
\newcommand{\G}{\Gamma}
\newcommand{\os}{CV_r}
\begin{document}

\title[Counting conjugacy classes of fully irreducibles]{Counting conjugacy classes of fully irreducibles: double exponential growth}

\author{Ilya Kapovich and Catherine Pfaff}

\address{\tt  Department of Mathematics and Statistics, Hunter College of CUNY\newline
  \indent 695 Park Avenue, New York, NY 10065
  \newline \indent  {\url{http://math.hunter.cuny.edu/ilyakapo/}}, }
  \email{\tt ik535@hunter.cuny.edu}

\address{Department of Math \& Stats, Queen's University \newline
  \indent
Jeffery Hall, 48 University Ave.,
Kingston, ON Canada, K7L 3N6
  \newline
  \indent  {\url{http://mast.queensu.ca/~cpfaff}}, }
  \email{\tt c.pfaff@queensu.ca}


\keywords{free group, Outer space, closed geodesics, fully irreducible}
\subjclass[2010]{Primary 20F65, Secondary 57M, 37B, 37D}

\begin{abstract}
Inspired by results of Eskin and Mirzakhani~\cite{EM11} counting closed geodesics of length $\le L$ in the moduli space of a fixed closed surface, we consider a similar question in the $\out$ setting. The Eskin-Mirzakhani result can be equivalently stated in terms of counting the number of conjugacy classes (within the mapping class group) of pseudo-Anosovs whose dilatations have natural logarithm $\le L$. Let $\mathfrak N_r(L)$ denote the number of $\out$-conjugacy classes of fully irreducibles satisfying that the natural logarithm of their dilatation is $\le L$.
We prove for $r\ge 3$ that as $L\to\infty$, the number $\mathfrak N_r(L)$ has double exponential (in $L$) lower and upper bounds.
These bounds reveal behavior not present in the surface setting or in classical hyperbolic dynamical systems.
\end{abstract}

\maketitle

\section{Introduction}

The theme of counting closed geodesics plays an important role in geometry and dynamics, and primarily dates back to the seminal work of Margulis in the 1960s-1970s~\cite{Mar69,Mar70}.
Margulis considered the situation where $M$ is a closed Riemannian manifold of curvature $\le -1$, and proved that if $\mathfrak N(L)$ is the number of closed geodesics in $M$ of length $\le L$, then
\[
\mathfrak N(L)\sim \frac{e^{hL}}{hL},
\]
where $h$ is the topological entropy of the geodesic flow on $M$ (equivalently the volume entropy of $M$). Here $\sim$ means that the ratio of two functions converges to $1$ as $L\to\infty$. This result can also be interpreted as counting the number of conjugacy classes $[\gamma]$ of elements $\gamma\in \pi_1(M)$ with translation length $\le L$ in $\widetilde M$. There were many generalizations of Margulis' result to other contexts, including manifolds with cusps, manifolds of nonpositive curvature, orbifolds, and Teichm\"uller space.

While the moduli space of a surface is not a manifold, recent important work of Eskin and Mirzakhani~\cite{EM11} provides an analog of Margulis' theorem in the moduli space setting. Namely, let $\Sigma_g$ be a closed connected oriented surface of genus $g\ge 2$, let $\mathcal T(\Sigma_g)$ denote the Teichm\"uller space of $\Sigma_g$, endowed with the Teichm\"uller metric, and let $\mathcal M(\Sigma_g)$ denote the moduli space, locally also equipped with the Teichm\"uller metric. Denote by $\mathfrak N_g(L)$ the number of closed Teichm\"uller geodesics in $\mathcal M(\Sigma_g)$ of length $\le L$ and let $h=6g-6$. Eskin and Mirzakhani proved~\cite{EM11} that
\[
\mathfrak N_g(L)\sim \frac{e^{hL}}{hL}.
\]
Every closed geodesic in $\mathcal M(\Sigma_g)$ uniquely corresponds to the $MCG(\Sigma_g)$-conjugacy class of a pseudo-Anosov element $\vphi$ of the mapping class group $MCG(\Sigma_g)$ and the length of that closed geodesic is equal to the translation length of $\vphi$ along the (unique) Teichm\"uller geodesic axis $A_\vphi\subseteq \mathcal T(\Sigma_g)$. That translation length, in turn, is equal to $\log\lambda(\vphi)$, where $\lambda(\vphi)$ is the \emph{dilatation} or \emph{stretch factor} of $\vphi$. Thus, $\mathfrak N_g(L)$ is equal to the number of $MCG(\Sigma_g)$-conjugacy classes of pseudo-Anosov elements $\vphi\in MCG(\Sigma_g)$ with $\log \lambda(\vphi)\le L$.

Shifting to homotopy equivalences of graphs and outer automorphisms of free groups, we look to count $\out$ conjugacy classes. There the \emph{Culler-Vogtmann Outer space} $CV_r$ provides a counterpart to the Teichm\"uller space of a closed surface and the quotient $\mathcal{M}_r$ of $CV_r$ under the $\out$ action provides a counterpart to the moduli space $\mathcal{M}(\Sigma_g)$. There is also a natural asymmetric geodesic metric $d$ on $CV_r$ (see \cite{FrancavigliaMartino}). In the $\out$ setting, the main analog of a pseudo-Anosov mapping class is a ``fully irreducible" outer automorphism. An outer automorphism $\vphi$ is called \emph{fully irreducible} if no positive power fixes the conjugacy class of a nontrivial proper free factor.

We again have a notion of a stretch factor, relating to the translation distance along a geodesic. Let $X$ be a free basis of $F_r$. For a fully irreducible $\vphi\in\out$ and $1\ne w\in F_r$ such that $w$ does not represent a $\vphi$-periodic conjugacy class in $F_r$, the limit
\[
\lambda(\vphi):=\lim_{n\to\infty}\sqrt[n]{||\vphi^n(w)||_X}
\]
exists and is independent of $w$ and $X$. This limit is called the \emph{stretch factor} of $\vphi$. Every fully irreducible $\vphi\in \out$ admits an invariant geodesic axis in $CV_r$ on which $\vphi$ acts with translation length $\log\lambda(\vphi)$. Crucially, unlike in the Teichm\"uller space setting, such an axis is in general highly non-unique.

Our main result is:

\begin{thm}\label{thm:main}
For each integer $r\ge 3$, there exist constants $a=a(r)>1,b=b(r)>1, c=c(r)>1$ so that: For $L\ge 1$, let $\mathfrak N_r(L)$ denote the number of $\out$-conjugacy classes of fully irreducibles $\vphi\in \out$ with $\log\lambda(\vphi)\le L$. Then there exists an $L_0\ge 1$  such that for all $L\ge L_0$ we have
\[
c^{e^L}\le \mathfrak N_r(L)\le  a^{b^L},
\]
where $e$ is the base of the natural logarithm.

Therefore, $c^{e^L}$ bounds from below the number of equivalence classes of closed geodesics in $\mathcal{M}_r$ of length bounded above by $L$.
\end{thm}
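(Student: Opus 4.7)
The plan is to establish the upper and lower bounds separately, both passing through train track representatives in the sense of Bestvina--Handel.

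For the upper bound, every fully irreducible $\vphi\in\out$ admits a train track representative $f\colon G\to G$ on a marked graph $G$ of rank $r$, whose transition matrix is a non-negative integer matrix with Perron--Frobenius eigenvalue $\lambda(\vphi)$. The hypothesis $\log\lambda(\vphi)\le L$ gives $\lambda(\vphi)\le e^L$. Since there are only finitely many combinatorial types of graph of rank $r$, one fixes the underlying graph and counts the train track maps on it: such an $f$ is determined by its tuple of reduced edge-path images $(f(e))_e$, and standard Perron--Frobenius estimates bound $|f(e)|$ by $C\cdot e^{cL}$ for constants $C,c=c(r)$. The number of reduced edge-paths of length at most $Ce^{cL}$ is $\le (2r)^{Ce^{cL}}$, so the number of combinatorial train track maps with $\log\lambda\le L$ is $\le a^{b^L}$ with $b=e^c$. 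Since every fully irreducible conjugacy class admits at least one such train track datum, this yields the upper half of the statement.

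For the lower bound, I would construct a family of $c^{e^L}$ fully irreducibles with $\log\lambda\le L$ whose $\out$-conjugacy classes are pairwise distinct. The natural setting is the rank-$r$ rose $R_r$: prescribe a positive train track map $f\colon R_r\to R_r$ by choosing reduced edge-images $f(x_i)$ of length $\approx e^L$ assembled from a fixed finite alphabet of legal ``building blocks'' in $\{x_1^{\pm1},\dots,x_r^{\pm1}\}$. This yields $c^{e^L}$ candidate tuples, and since $|f(x_i)|\le O(e^L)$ the transition matrix has entries $O(e^L)$, Perron--Frobenius eigenvalue $O(e^L)$, and hence $\log\lambda(f)\le L$ after a trivial adjustment of $L_0$.

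The main obstacle --- and, as the paper's title suggests, the technical heart of the argument --- is ensuring that enough of these candidate train track maps actually represent fully irreducibles and that they lie in pairwise distinct $\out$-conjugacy classes. For a train track map with primitive transition matrix, being fully irreducible is equivalent to having no periodic indivisible Nielsen path (INP); primitivity can be enforced combinatorially by requiring each $f(x_i)$ to traverse every oriented edge. The hard task is \emph{Nielsen path prevention}: show that for a positive-density subset of the candidate tuples no periodic INP can form. The strategy would be to isolate the local combinatorial patterns needed to support an INP, prove that these patterns are exponentially sparse inside a generic long word built from the building-block alphabet, and conclude that a positive proportion of tuples are INP-free. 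Finally, the overcounting coming from the change-of-marking ambiguity on $R_r$ contributes only a factor polynomial in $e^L$ per $\out$-conjugacy class, which is negligible against $c^{e^L}$. Combining these ingredients gives the lower bound $c^{e^L}\le\mathfrak N_r(L)$, and the final sentence of the theorem then follows immediately from the correspondence between fully irreducible conjugacy classes and closed geodesics in $\mathcal{M}_r$.
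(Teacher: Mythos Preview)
Your upper bound is essentially the paper's argument (Lemma~5.6): bound the entries of the transition matrix via Perron--Frobenius, then count edge-paths. That part is fine.

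The lower bound sketch, however, has two genuine gaps.

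First, your full irreducibility criterion is incomplete. Primitivity of the transition matrix together with the absence of periodic INPs is \emph{not} sufficient; one also needs the local Whitehead graph at each vertex to be connected (see the paper's Proposition~\ref{prop:FIC}). The paper ensures this by making the word $w$ \emph{full} (containing every two-letter subword $x_ix_j$) and by precomposing with a fixed map $g_{12,1}$, then computing the local Whitehead graph explicitly.

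Second, and more seriously, your claim that ``the overcounting coming from the change-of-marking ambiguity on $R_r$ contributes only a factor polynomial in $e^L$ per $\out$-conjugacy class'' is exactly the step that requires proof, and you have given none. A priori a single fully irreducible conjugacy class can have many combinatorially distinct train track representatives on the rose---the axis bundle can be complicated, and the na\"ive upper bound on $\#\mathbf U(\vphi)$ is itself double exponential in $L$. The paper's key idea, which your proposal misses entirely, is to manufacture outer automorphisms $\vphi_w$ that are \emph{lone axis} ageometric fully irreducibles. For such $\vphi$, the unique axis is a single periodic fold line, and every rose-based train track representative must occur along one period of it; this gives the bound $\#\mathbf U(\vphi)\le ||f||\approx e^L$ (Lemma~\ref{lem:multipl}). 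Achieving the lone axis property requires controlling the ideal Whitehead graph and the rotationless index precisely, which is why the paper precomposes with the specific map $g_{12,1}$ and then verifies PNP-freeness by an explicit step-by-step analysis through that fold sequence---not by a generic density argument of the kind you propose. Without the lone axis mechanism (or some replacement that actually bounds $\#\mathbf U(\vphi)$), your argument cannot separate conjugacy classes, and the lower bound does not follow.
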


By a closed geodesic in $\mathcal{M}_r$ we mean the image in $\mathcal{M}_r$ of a periodic geodesic (see \S \ref{ss:StallingsFoldDecompositions}) in $CV_r$. Two such closed geodesics in $\mathcal{M}_r$ are considered equivalent if they come from periodic geodesics in $CV_r$ corresponding to conjugate elements of $\out$. Not all closed geodesics in $\mathcal{M}_r$ come from axes of fully irreducibles, since there exist nonirreducible elements in $\out$ that admit periodic geodesic lines in $CV_r$. However, it is known, by a combination of results of Bestvina and Feighn~\cite{bf11} (see also \cite{a11}) and of Dowdall and Taylor~\cite{dt15} that a $\vphi$-periodic geodesic $A_\vphi\subseteq CV_r$ is ``contracting" with respect to the asymmetric Lipschitz metric $d$ on $CV_r$ if and only if $\vphi\in \out$ is fully irreducible. Therefore, Theorem~\ref{thm:main} can be interpreted as providing double exponential lower and upper bounds on the number of equivalence classes of ``contracting" closed geodesics of length $\le L$ in $\mathcal{M}_r$.

The case of rank $r=2$ is special, and Theorem~\ref{thm:main} does not apply. The group $Out(F_2)$ is commensurable with the mapping class group $MCG(\Sigma_{1,1})$ of the punctured torus $\Sigma_{1,1}$ and with the group $SL(2,\mathbb Z)$. The Teichm\"uller space $\mathcal T(\Sigma_{1,1})$ is the hyperbolic plane $\mathbb H^2$, with a faithful discrete isometric action of  $MCG(\Sigma_{1,1})$ as a nonuniform lattice. Counting $\mathfrak N_2(L)$ amounts to (up to correcting for the commensurability effects) computing the number of conjugacy classes in that lattice of translation length $\le L$. By the classical counting results, this produces exponential asymptotics for $\mathfrak N_2(L)$, rather than the double exponential asymptotics displayed in Theorem~\ref{thm:main}.

Counting the conjugacy classes of fully irreducibles $\vphi\in \out$ with a given bound on $\log\lambda(\vphi)$ is a considerably different problem from the corresponding mapping class group problem. The main difference is that the Outer space $CV_r$ does not admit any of the nice local analytic structures present in the Teichm\"uller space setting, precluding the use of classical methods of ergodic theory in analyzing the geodesic flow dynamics. Indeed, we will see below that the counting results we obtain in the $\out$ setting exhibit new behavior, not present in classical hyperbolic dynamical systems. Recall that for a fully irreducible $\vphi\in\out$, the number $\log\lambda(\vphi)$ can be interpreted as the length of a closed geodesic in the moduli space  $\mathcal{M}_r$ corresponding to $\vphi$. Thus, the question of counting curves can be translated into a question about stretch factors. 

As a companion result to Theorem~\ref{thm:main} we prove that the number of \emph{distinct values} of $\log\lambda(\vphi)$ in the setting of Theorem~\ref{thm:main} grows at most exponentially in $L$:

\begin{thm}\label{thm:main2}
Let $r\ge 2$ be an integer.
Then there exists a constant $p=p(r)>1$ such that
\[
\#\{ \lambda(\vphi) \mid \vphi\in\out \text{ is fully irreducible with } \log\lambda(\vphi)\le L \} \le p^L.
\]
\end{thm}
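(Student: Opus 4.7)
The plan is to reduce the counting problem to estimating the number of Perron--Frobenius eigenvalues of non-negative integer matrices of bounded size, using the Bestvina--Handel train track machinery. First I would invoke the standard fact that every fully irreducible $\vphi\in\out$ admits an expanding irreducible train track representative $f\from G\to G$, where $G$ is a finite connected graph of rank $r$. By pruning valence-one vertices and collapsing valence-two vertices (neither of which alters the stretch factor), one may arrange that every vertex of $G$ has valence at least three, so by Euler characteristic $|E(G)|\le 3r-3$. The stretch factor $\lam(\vphi)$ is then the Perron--Frobenius eigenvalue of the transition matrix $M(f)$, a non-negative integer matrix of size $n\le 3r-3$.

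Next, given $\log\lam(\vphi)\le L$, every complex eigenvalue of $M(f)$ has modulus at most $e^L$. Writing the characteristic polynomial of $M(f)$ as $x^n+\sum_{i=1}^n c_i x^{n-i}$, each coefficient $c_i$ is an integer equal, up to sign, to the $i$-th elementary symmetric polynomial in the eigenvalues, so $|c_i|\le \binom{n}{i}e^{iL}$. The number of integer tuples $(c_1,\dots,c_n)$ satisfying these bounds, and hence the number of possible characteristic polynomials, is at most
\[
\prod_{i=1}^n\bigl(2\binom{n}{i}e^{iL}+1\bigr)\le C(r)\, e^{L\cdot n(n+1)/2}
\]
for some constant $C(r)$ depending only on $r$. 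Each such polynomial has at most $n$ complex roots, and $\lam(\vphi)$ is always one of them, so the number of distinct values of $\lam(\vphi)$ with $\log\lam(\vphi)\le L$ is bounded by $n\cdot C(r)\, e^{L\cdot n(n+1)/2}$. Taking any $p=p(r)>e^{n(n+1)/2}$ large enough to absorb the multiplicative constant (noting that the count is zero for $L\le 0$ since $\lam(\vphi)>1$ always) yields the desired bound $p^L$.

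The principal obstacle I anticipate is Step 1: securing the uniform bound $n=n(r)\le 3r-3$ on the size of the transition matrix, independent of the particular $\vphi$. Once that is in hand, via the Bestvina--Handel theorem together with standard graph simplifications for fully irreducibles, the remaining argument is a routine count of integer polynomials of bounded degree whose roots all lie in a disk of prescribed radius.
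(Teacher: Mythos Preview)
Your proof is correct, and it follows a genuinely different route from the paper's.  The paper argues by bounding the \emph{entries} of the transition matrix: it invokes a cited inequality (Proposition~A.4 of \cite{kb16}) to the effect that an expanding irreducible train track map $f:\Gamma\to\Gamma$ with $\log\lambda(f)\le L$ has $\max m_{ij}\le k\,e^{(k+1)L}$ where $k=\#E\Gamma\le 3r-3$, and then counts the resulting integer matrices, obtaining exponentially many in $L$.  You instead bypass the matrix entries entirely and go straight to the characteristic polynomial, using only that (i) it has integer coefficients and (ii) all its roots lie in the disk of radius $e^L$ since $\lambda(\vphi)$ is the spectral radius of $M(f)$.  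The elementary symmetric function bound $|c_i|\le\binom{n}{i}e^{iL}$ then gives exponentially many polynomials, hence exponentially many possible Perron--Frobenius eigenvalues.

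Each approach has its merits: the paper's argument in fact bounds the number of transition matrices (slightly more information than needed here), at the cost of quoting an external entry estimate; your argument is more self-contained, needing only undergraduate linear algebra, and is well tailored to the actual quantity being counted.  One small point to tidy: since $n=\#E\Gamma$ depends on the chosen representative, you should take $n=3r-3$ (the maximal value) in the final exponent, or else sum over $n\le 3r-3$; either way the conclusion is unaffected.
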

This fact stands in sharp contrast with the double exponential growth established in Theorem~\ref{thm:main}. Other $\out$-conjugacy class invariants of fully irreducibles, such as the index, the index list, and the ideal Whitehead graph of $\vphi$ (see \cite{hm11}), only admit finitely many values for a given rank $r$ and thus are also not suitable for counting conjugacy classes.

\subsection{Implied complications to defining a Patterson-Sullivan measure on $\partial CV_r$}

An interesting question raised by Theorem~\ref{thm:main} concerns trying to define a version of the Patterson-Sullivan measure on the boundary $\partial CV_r$ of the Outer space $CV_r$. Usually, in the context of a group $G$ acting isometrically on a space $X$ with some ``hyperbolic-type" features, one first defines the ``critical exponent" or ``volume entropy" $h(X)$ for the action as
\[h(X)=\lim_{L\to\infty} \frac{1}{L} \log\#\{g\in G| d_X(x_0,gx_0)\le L\},\]
where $x_0\in X$ is some basepoint. The \emph{Poincare series} for the action is $\Pi_{x_0}(s)=\sum_{g\in G} e^{-s d_X(x_0,gx_0)}$. Typically the Poincar\'e series diverges for $s=h(X)$. A \emph{Patterson-Sullivan measure} $\mu_X$ for $X$ is defined as a weak limit, as $s\searrow h(X)$, of
\[
\frac{1}{\Pi_{x_0}(s)} \sum_{g\in G} e^{-s d_X(x_0,gx_0)} Dirac(g x_0),\tag{$\ddag$}
\]
where the limit is taken with respect to a suitable compactification $\overline{X}=X\cup \partial X$ of $X$. The divergence of the Poincare series at $s=h(X)$ implies that $\mu$ is supported on $\partial X$. Patterson-Sullivan measures proved to be a useful and important tool in differential geometry and dynamics (see the survey~\cite{DeSt} for details), including the Teichm\"uller space context~\cite{Ge14}.
Although the Outer space $CV_r$ has many hyperbolic-like features, Theorem~\ref{thm:main}  shows that even counting conjugacy classes of elements in $\out$ with translation length $\le L$ in $CV_r$ already results in double exponential growth. As we discuss in more detail in \S \ref{sec:Q} below, doubly exponentially growing sequences have (at least) two distinct associated ``entropy" or ``critical exponent" quantities. Thus, in light of Theorem~\ref{thm:main}, when trying to construct a Patterson-Sullivan measure on $\partial CV_r$, one would have to somehow modify the notion of a Poincare series and then adapt $(\ddag)$ above to account for the double exponential growth and for the presence of these two entropies.

For various flows associated with hyperbolic-like dynamical systems (such as geodesic flows on negatively curved manifolds, the Teichm\"uller geodesic flow, etc), one can usually compute the topological entropy of the flow as the exponential growth rate of the number of closed periodic orbits of the flow; see, in particular, \cite[Theorems 18.5.5 and 18.5.7]{KH95}. This fact plays a key role in the proof of most of the results related to counting closed geodesics, from Margulis~\cite{Mar69,Mar70} to Eskin-Mirzakhani~\cite{EM11}. In the context of the Outer space $CV_r$, although there is no well-behaved tangent bundle, one can still consider various spaces of ``lines" in $CV_r$, such as geodesic folding lines, sometimes with some extra restrictions; see, for example,~\cite{hm11,bf11,FrancavigliaMartino,akp14}. Any such ``space of lines" $\mathcal L_r(CV_r)$ in $CV_r$ has a natural $\out$-action by translations and a natural parameter shift action of $\mathbb R$. Factoring out the action of $\out$ produces the corresponding ``space of lines" $\mathcal L_r(\mathcal M_r)$ in the moduli space $\mathcal M_r=CV_r/\out$, endowed with the $\mathbb R$-shift action. Theorem~\ref{thm:main} indicates that for most reasonable choices of $\mathcal L_r(CV_r)$, the space   $\mathcal L_r(\mathcal M_r)$ will have infinite topological entropy with respect to this $\mathbb R$-action. Therefore, studying $\mathcal L_r(\mathcal M_r)$ as a dynamical system, e.g. trying to understand shift-invariant measures on $\mathcal L_r(\mathcal M_r)$,  will require properly accounting for the double exponential growth exhibited in Theorem~\ref{thm:main}.

\subsection{Proof complications, ideas, and sketch}

We discuss the main ideas in the proof of Theorem~\ref{thm:main} here. However, we first remark that the original proof by Margulis in the manifold setting, as well as the proofs of most generalizations, exploit the properties of the geodesic flow on the underlying structure and ultimately rely on some form of coding by symbolic dynamical systems. We not only lack such a flow but lack a local analytic structure. Further, as a result of a fully irreducible outer automorphism having many associated axes, it is easy to over-count. We thus heavily rely on the lone axis machinery developed in \cite{LoneAxes16}.

Every fully irreducible $\vphi\in \out$ is the induced map of fundamental groups for an ``efficient" graph map $f:\G\to\G$, called a \emph{train track map} \cite{bh92}. Here $\G$ is \emph{marked} by an identification $\pi_1(\G)\cong F_r$. The stretch factor $\lambda(\vphi)$ is then equal to the Perron-Frobenius eigenvalue $\lambda(f)$ of the transition matrix of $f$. We introduce a new $\out$-conjugacy class invariant $\mathbf U(\vphi)$ which counts the number of distinct combinatorial types of unmarked train track representatives $f\colon\G\to\G$ of a fully irreducible $\vphi\in \out$ such that the underlying graph $\G$ is an $r$-rose $R_r$, i.e. graph with a single vertex and Betti number $b_1(\G)=r$.  A priori, the upper bound for the cardinality of $\mathbf U(\vphi)$ is double exponential in $\log\lambda(\vphi)$, see Lemma~\ref{lem:upper-bound} below. However, there is an important class of fully irreducibles for which this bound is much better. These are the \cite{LoneAxes16} \emph{lone axis} fully irreducibles, which are so called ``ageometric" fully irreducibles $\vphi\in \out$ with a unique invariant axis $A_\vphi$ in $CV_r$. In \cite{LoneAxes16}, Mosher and Pfaff provided an ``ideal Whitehead graph" $\mathcal{IW}(\vphi)$ criterion for an ageometric fully irreducible $\vphi$ to have a lone axis. Here we show that, if $\vphi$ is a lone axis fully irreducible with a train track representative $f\colon R_r\to R_r $, then $\#\mathbf U(\vphi)\le ||f||$, where $||f||$ is the sum of the lengths of the edge-paths $f(e)$ as $e$ varies over the edges of $R_r$.

Now let $r\ge 3$ and let $X=\{x_1,\dots, x_r\}$ be a free basis of $F_r$. For a ``random" positive word $w(x_2,\dots,x_r)$ of length $e^L$ we construct an explicit positive automorphism $\psi_w$ of $F_r$ such that, when viewed as a train track map $g_w$ on the rose $R_r$, it satisfies $||g_w||\approx e^L$. We then precompose $g_w$ with another positive train track map $\eta: R_r\to R_r$ to get a train track map $f_w=g_w\circ \eta: R_r\to R_r$ representing an outer automorphism $\vphi_w\in\out$. Denote the set of all such $\vphi_w$ by $\mS_r$. The fact that $\eta$ does not depend on $w$ and $L$ implies that $||f_w||\le C e^L$. The number of distinct ``random'' positive words  $w(x_2,\dots,x_r)$ of length $e^L$ is on the order of $(r-1)^{e^L}$, which gives us on the order of  $(r-1)^{e^L}$ combinatorially distinct unmarked train track maps $f_w: R_r\to R_r$. The key step is to choose $\eta$ in such a way that for all $w$ as above, $\vphi_w$ is a lone axis ageometric fully irreducible. Technically, this is the hardest part of the proof since satisfying the lone axis property for $\vphi_w$ requires, among other things, that $f_w$ have no periodic Nielsen paths (PNPs). Here we rely on train track automata ($\mathcal{ID}$ diagrams) and PNP prevention technology developed by Pfaff in~\cite{Thesis,IWGII}. Once we know that each $\vphi_w$ is lone axis, the above estimate for the size of $\mathbf U$ implies that $\#\mathbf{U}(\vphi_w)\le Ce^L$. Thus the maps $f_w$ give approximately $(r-1)^{e^L}$ combinatorially distinct train track maps on roses representing ageometric fully irreducibles $\vphi_w$, each with  $\#\mathbf{U}(\vphi_w)\le Ce^L$. Therefore,
\[
\#\mS_r\ge const \frac{(r-1)^{e^L}}{Ce^L}\ge_{L\to\infty} (r-1.5)^{e^L} \tag{$\ddag\ddag$}.
\]
The fact that each $\vphi_w$ has a train track representative $f_w$ with $||f_w||\le C e^L$ implies that $\log \lambda(\vphi_w)\le L+\log C$, which, together with ($\ddag$), leads to the lower bound in Theorem~\ref{thm:main}. The upper bound in Theorem~\ref{thm:main} is much easier, and is obtained by a Perron-Frobenius counting argument estimating from above the number of train track maps $f$ with  $\log\lambda(f)\le L$; see Lemma~\ref{lem:upper-bound} below.

Ultimately, the proof of Theorem~\ref{thm:main} relies on the fact that for $r\ge 3$ the number of conjugacy classes of primitive elements in $F_r$ of length $\le L$ grows exponentially in $L$. By contrast, the number of simple closed geodesics of length $\le L$ on a closed hyperbolic surface grows polynomially in $L$~\cite{Mi08}.

\subsection{Related results}

The first and only, before this paper, result about counting conjugacy classes of fully irreducibles was obtained in a recent paper of Hull and Kapovich~\cite{hk17}. They proved, roughly, that for $r\ge 3$ the number of distinct $\out$-conjugacy classes $[\vphi]$ of fully irreducibles $\vphi$ coming from a ball of radius $L$ in the Cayley graph of $\out$, and with $\log\lambda(\vphi)$ on the order of $L$, grows exponentially in $L$. The paper also provides an informal heuristic argument for why one might expect that the total number of $\out$-conjugacy classes $[\vphi]$ of fully irreducibles $\vphi$ with $\log\lambda(\vphi)\le L$ grows doubly exponentially in $L$. Here we prove that this is indeed the case.

Also, we have been informed by Kasra Rafi that during the 2016 MSRI special semester, Bestvina and Rafi came up with an argument (unpublished), for proving that the number of distinct fully irreducibles $\vphi\in \out$ with $\log\lambda(\vphi)\le L$ grows doubly exponentially in $L$.  Their argument was based on a roughly similar construction to that outlined in the proof sketch above (although not using lone axis fully irreducibles).
However, unlike our Theorem~\ref{thm:main}, their argument only concerned counting fully irreducible elements in $\out$ with $\log\lambda\le L$ rather than counting $\out$-conjugacy classes of such fully irreducibles.

A new paper of Gekhtman and Yang~\cite{GY22} obtains a general result about counting conjugacy classes in groups acting on proper metric spaces with contracting elements and with finite critical exponent. Their result does not directly apply to our situation since the natural metric on $CV_r$ is asymmetric and the action of $\out$ on $CV_r$ has infinite critical exponent.  Still, it may be possible to extend and generalize the methods of \cite{GY22} to obtain an alternate proof of our Theorem~\ref{thm:main}.

\vskip20pt

\section{Definitions and Background}{\label{s:Dfs}}

\vskip10pt

We assume throughout this paper that $r\ge 2$ is an integer, and that $F_r$ is the rank-$r$ free group with a fixed free basis $X=\{x_1, \dots, x_r\}$. $R_r$ will denote the $r$-petaled rose, i.e. the graph with $r$ loop-edges at a single vertex. We fix an orientation on $R_r$ and an identification of each positive edge $e_i$ of $R_r$ with an element $x_i$ of $X$.

\vskip10pt

\subsection{Train track maps \& (fully) irreducible outer automorphisms.}

\vskip1pt

This paper follows the conventions and formalism regarding graphs and graph maps explained in detail in \cite{dkl15}. In particular, unless specified otherwise, graphs are 1-dimensional CW-complexes equipped with a ``linear atlas of charts" on edges and all graph maps, topological representatives, and train track maps are assumed to be ``linear graph maps", in the terminology of \cite{dkl15}. Basically these assumptions translate to working in the PL category, ruling out various pathologies for fixed and periodic points of train track maps in relation to Nielsen paths and periodic Nielsen paths. We refer the reader to \cite{dkl15} for more details (not important for us here).


For a graph $\G$, we denote by $E\G$ the (oriented) edge set of $\G$ and denote by $V\G$ the vertex set of $\G$. For an edge-path $\gamma$ in $\G$ we use $|\gamma|$ to denote the combinatorial length of $\gamma$, that is the number of edges $\gamma$ traverses (with multiplicity).

\begin{df}[Graph maps \& train track maps]\label{d:Maps}
We call a continuous map of graphs $g \colon \Gamma \to \Gamma'$ a \emph{graph map} if $g$ takes vertices to vertices and edges to edge-paths of positive length.

For a finite connected graph $\Gamma$ with no degree 1 vertices, a graph map $g \from \Gamma \to \Gamma$ is a \emph{train track map} if $g$ is a homotopy equivalence and if for each $k\ge 1$ the map $g^k$ is locally injective on edge interiors. In particular, all of our train track maps $g:\G\to\G$ are necessarily surjective. We call a train track map $g:\G\to\G$ \emph{expanding} if for each edge $e \in E\G$ we have that $|g^n(e)|\to\infty$ as $n\to\infty$. We call $g$ \emph{irreducible} if for any edges $e,e'\in E\G$ there is some $k\ge 1$ such that the path $f^k(e')$ contains $e$ or $\bar{e}$.

If $\vphi\in \out$, and $\G$ is a finite connected graph with no degree-1 vertices, equipped with a \emph{marking} (i.e. a homotopy equivalence $m \from R_r \to \Gamma$), and $g \from \Gamma \to \Gamma$ is a graph map such that $g_{*}=\vphi$, then we say $g$ \emph{represents} $\vphi$.

Note that, via our identification of $E(R_r)$ with the free basis $X$, from an automorphism $\Phi\in Aut(F_r)$ we obtain an \emph{induced} graph map $R_r\to R_r$ sending $e_i$ to the edge path $e_{i,1} \dots e_{i,k}$ where $\Phi(x_i)=x_{i,1} \dots x_{i,k}$. (This map is a representative of $\vphi$, the outer class of $\Phi$.) We may sometimes blur the distinction between an automorphism and the graph map it induces.
\end{df}

\begin{df}[Directions]\label{d:Directions} Let $x\in V(\Gamma)$. The \emph{directions} at $x$ are the germs of initial segments of edges emanating from $x$. For each directed edge $e \in E\G$, we let $D(e)$, or just $e$, denote the initial direction of $e$. For an edge-path $\gamma=e_1 \dots e_k$, define $D \gamma := D(e_1)$. Let $g \from \Gamma \to \Gamma$ be a graph map. Then denote by \emph{$Dg$} the map of directions induced by $g$, i.e. $Dg(d)=D(g(e))$ for $d=D(e)$. A direction $d$ is \emph{periodic} if $Dg^k(d)=d$ for some $k>0$ and \emph{fixed} when $k=1$.
\end{df}

\begin{df}[Turns]\label{d:GateStructures} An unordered pair of directions $\{d_i, d_j\}$ at a common vertex is called a \emph{turn}, and a \emph{degenerate turn} if $d_i = d_j$. If $\gamma_1$ and $\gamma_2$ are paths in $\Gamma$ initiating at a common vertex, then by $\{\gamma_1, \gamma_2\}$ we will mean $\{D\gamma_1, D\gamma_2\}$. For a path $\gamma=e_1e_2 \dots e_{k-1}e_k$ in $\Gamma$ where $e_1$ and $e_k$ may be partial edges, we say $\gamma$ \emph{takes} $\{\overline{e_i}, e_{i+1}\}$ for each $1 \leq i < k$. An edge-path which takes no degenerate turns is called \emph{tight}, as is a homotopy equivalence whose edge images are each tight. For both edges and paths we more generally use an ``overline'' to denote a reversal of orientation.

Let $g \colon \Gamma \to \Gamma$ be a graph map. Denote also by $Dg$ the map induced by $Dg$ on the turns of $\Gamma$.
A turn $\tau$ is called g-\emph{prenull} if $Dg(\tau)$ is degenerate.
The turn $\tau$ is called an \emph{illegal turn} for $g$ if $Dg^k(\tau)$ is degenerate for some $k$ and a \emph{legal turn} otherwise. A path $\gamma$ is \emph{$g$-legal} if each turn of $\gamma$ is $g$-legal.

A turn $T$ in $\Gamma$ is \emph{$g$-taken} if there exists an edge $e$ so that $g(e)$ takes $T$. We use $\mT(g)$ to denote the set of $g$-taken turns and define $\mT_{\infty}:=\cup_{k\ge 1}\mT(g^k)$.
\end{df}

\begin{df}[Transition matrix $M(g)$, Perron-Frobenius matrix, Perron-Frobenius eigenvalue] The \emph{transition matrix} $M(g)$ of a train track map $g \from \Gamma \to \Gamma$ is an $m\times m$ integer matrix (where $m$ is the number of positive edges in $E\G$) $M(g)=(m_{ij})$ such that for each $1\le i,j\le m$, the entry $m_{ij}$ the number of times the path $g(e_i)$ traverses $e_j$ in either direction.

A transition matrix $M=(m_{ij})$ is \emph{Perron-Frobenius (PF)} if there exists a $k\ge 1$ such that $M^k$ is strictly positive.
By Perron-Frobenius theory, we know that each such matrix has a unique eigenvalue of maximal modulus and that this eigenvalue is real and $> 1$. 
This eigenvalue is called the \emph{Perron-Frobenius (PF) eigenvalue} of $M$, and for $M(g)$ is denoted $\lambda(g)$.
\end{df}

\begin{df}[Stretch factor of a fully irreducible]
Given a free basis $X$ of $F_r$ and element $w\in F_r$, denote by $||w||_X$ the cyclically reduced length of $w$ with respect to $X$.
For a fully irreducible $\vphi\in\out$, free basis $X$, and $1\ne w\in F_r$ such that the conjugacy class $[w]$ is not $\vphi$-periodic, it is known~\cite{bogop08} that the limit
\[
\lim_{n\to\infty} \sqrt[n]{||\vphi^n(w)||_X}
\]
exists and is independent of $X$ and $w$.  This limit is called the \emph{stretch factor} of $\vphi$ and is denoted $\lambda(\vphi)$.

It is known that if $\vphi\in\out$ is fully irreducible then $\vphi$ admits a train track representative and that every such train track representative is expanding and has a Perron-Frobenius transition matrix. Moreover, if $g:\G\to\G$ is a train track representative of $\vphi$ then $\lambda(g)=\lambda(\vphi)$ (see, for example,~\cite{bogop08}). 
\end{df}

\vskip10pt

\subsection{Periodic Nielsen paths}{\label{ss:PNPs}}

\vskip1pt

\begin{df}[Nielsen paths \& rotationless powers] Let $g \colon \Gamma \to \Gamma$ be an expanding irreducible train track map. Bestvina and Handel \cite{bh92} define a nontrivial immersed path $\rho$ in $\Gamma$ to be a \emph{periodic Nielsen path (PNP)} if $g^R(\rho) \cong \rho$ rel endpoints for some power $R \geq 1$ (and just a \emph{Nielsen path (NP)} if $R=1$). A NP $\rho$ is called \emph{indivisible} (hence is an ``iNP'') if it cannot be written as $\rho = \gamma_1\gamma_2$, where $\gamma_1$ and $\gamma_2$ are themselves NPs.

By \cite[Corollary 4.43]{fh11}, for each $r \geq 2$, there exists an $R(r) \in \NN$ such that for each expanding irreducible train track representative $g$ of each outer automorphism $\vphi \in \out$, each PNP for $g$ is an NP for $g^{R(r)}$. This power $R$ is called the \emph{rotationless} power.
\end{df}

We remark that iNPs have a specific structure, described in \cite[Lemma 3.4]{bh92}:

\begin{prop}{\label{P:PINP}}
Let $g \colon \Gamma\to\Gamma$ be an expanding irreducible train track map. Then every iNP $\rho$ in $\Gamma$ has the form $\rho= \overline{\rho_1}\rho_2$, where $\rho_1$ and $\rho_2$ are nondegenerate legal paths sharing their initial vertex $v\in \Gamma$ and such that the turn at $v$ between $\rho_1$ and $\rho_2$ is an illegal nondegenerate turn for $g$.
\end{prop}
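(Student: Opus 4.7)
The plan is to prove that the decomposition of $\rho$ into maximal legal subpaths consists of exactly two pieces meeting at an illegal turn at a common vertex. I will do this in three steps, working with a fixed power $R\ge 1$ such that $g^R(\rho)\simeq \rho$ rel endpoints.

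\emph{Step 1 (at least one illegal turn).} Suppose for contradiction that $\rho$ is entirely $g$-legal. Since $g$ is a train track map, every iterate $g^n$ is locally injective on edge interiors of legal paths, so $g^n(\rho)$ is already tight and has combinatorial length $\lam(g)^n|\rho|$. Because $g$ is expanding and irreducible, Perron-Frobenius theory forces $\lam(g)>1$, so these lengths strictly exceed $|\rho|$ for $n\ge 1$, contradicting $g^R(\rho)\simeq \rho$ rel endpoints.

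\emph{Step 2 (at most one illegal turn).} Decompose $\rho = \al_1\al_2\cdots \al_k$ into maximal $g$-legal subpaths meeting at illegal turns $v_1,\dots,v_{k-1}$. I assume $k\ge 3$ and aim for a contradiction with the indivisibility of $\rho$. After replacing $R$ by a multiple if needed, I may assume (by \cite[Corollary 4.43]{fh11}) that PNPs for $g$ are NPs for $g^R$ and that $Dg^R$ fixes every direction appearing in $\rho$. The core claim is that around each illegal turn $v_i$ there exist nondegenerate legal end- and initial-subpaths $\beta_i \subseteq \al_i$ and $\gamma_i \subseteq \al_{i+1}$ such that $\ol{\beta_i}\gamma_i$ is itself an NP for $g^R$. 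This follows by bounded cancellation: since $g^R$ stretches each maximal legal piece $\al_j$ by exactly $\lam(g)^R$, while the tightened image of $\rho$ equals $\rho$, all cancellation in $g^R(\rho)$ must localize in neighborhoods of the $v_i$ whose sizes are controlled by the bounded cancellation constant of $g^R$, and these neighborhoods close up into PNP bigons around each $v_i$. With $k\ge 3$, the bigons at $v_1$ and $v_{k-1}$ sit disjointly inside $\rho$ and produce a concatenation decomposition $\rho=\rho^{(1)}\rho^{(2)}$ into strictly shorter nontrivial NPs for $g^R$, contradicting the hypothesis that $\rho$ is indivisible.

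\emph{Step 3 (final form).} Combining the previous two steps gives $k=2$. Set $v:=v_1$, $\rho_1:=\ol{\al_1}$, and $\rho_2:=\al_2$. Then $\rho_1,\rho_2$ are nondegenerate legal paths sharing the initial vertex $v$, and the turn $\{D\rho_1,D\rho_2\}$ at $v$ is by construction the illegal nondegenerate turn between $\al_1$ and $\al_2$ in $\rho=\ol{\rho_1}\rho_2$, as required.

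The main obstacle is making the ``bigons localize around each $v_i$'' assertion in Step 2 rigorous: one has to use bounded cancellation together with the PL/linearity conventions of \cite{dkl15} to rule out pathological partial cancellations that might otherwise cross an entire intermediate legal segment $\al_i$ and couple two distant illegal turns, preventing the clean disjoint-bigon decomposition used to contradict indivisibility.
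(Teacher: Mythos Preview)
The paper does not give its own proof of this proposition: it is quoted as \cite[Lemma 3.4]{bh92} and stated without argument. So there is no ``paper's proof'' to compare against; I will evaluate your sketch on its own and against the Bestvina--Handel argument.

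Your Step~1 is fine in spirit, though the formula $|g^n(\rho)|=\lambda(g)^n|\rho|$ is not literally correct for combinatorial length (that identity holds for the eigenmetric length). All you need is that an expanding irreducible train track map sends every legal path to a strictly longer legal path for some power, which contradicts $[g^{R}(\rho)]=\rho$.

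Step~2 is where the real gap sits, and you correctly flag it yourself. Bounded cancellation controls how much of $g^R(\alpha_i)$ and $g^R(\alpha_{i+1})$ cancel \emph{in the image}, but it does not by itself tell you that the corresponding terminal/initial subsegments $\beta_i\subset\alpha_i$, $\gamma_i\subset\alpha_{i+1}$ satisfy $[g^R(\overline{\beta_i}\gamma_i)]=\overline{\beta_i}\gamma_i$, i.e.\ that they ``close up'' into sub-NPs. Nor does a fixed bounded cancellation constant for $g^R$ prevent the cancellation neighborhoods at $v_1$ and $v_{k-1}$ from overlapping in a \emph{fixed} path $\rho$: you have no lower bound on $|\alpha_2|,\dots,|\alpha_{k-1}|$ to play against that constant. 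So as written the argument does not establish the existence of a nontrivial NP subdecomposition.

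The Bestvina--Handel route avoids bounded cancellation entirely. One observes that tightening can only \emph{decrease} the number of illegal turns, while $[g^R(\rho)]=\rho$ has exactly $k-1$ of them; hence every illegal turn of $g^R(\rho)$ survives tightening, and the $i$-th maximal legal segment of $[g^R(\rho)]$ is precisely $\alpha_i$. This forces, for each $i$, a point $p_i\in\alpha_i$ (with $p_1,p_k$ the endpoints of $\rho$) such that the subpath of $\rho$ from $p_i$ to $p_{i+1}$ is itself fixed rel endpoints by $g^R$. With $k\ge 3$ this produces a genuine concatenation of shorter NPs, contradicting indivisibility. If you want to repair your Step~2, replace the bounded cancellation appeal with this illegal-turn-count/legal-segment-matching argument.
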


\begin{df}[Ageometric]{\label{d:ageometric}} A fully irreducible outer automorphism is called \emph{ageometric} if it has a train track representative with no PNPs.
\end{df}

The following proposition is a collection of arguments implicitly and explicitly given in a collection of other papers which we collect here in order to justify our procedure for finding (or ruling out the existence of) periodic Nielsen paths.

\begin{prop}{\label{P:AllLemmas}} Suppose that $g_1, \dots, g_n$ are tight homotopy equivalences satisfying:\\
\vspace{-5mm}
\begin{enumerate}
\item[1.] Each $g_k$ has precisely one prenull turn $\tau_k$
\item[2.] and with indices viewed mod n, some direction in $\tau_k$ is not in the image of $Dg_{k-1}$. 
\end{enumerate}
Suppose further that $g=g_n\circ\cdots\circ g_1$ is a train track map.
For each $1\leq k\leq n$, let $f_k:=g_{k-1}\circ\cdots\circ g_1\circ g_n\circ\cdots\circ g_k$.

Then\\
\vspace{-6mm}
{\begin{itemize}
  \item[a.] For each $1\leq k\leq n$, the prenull turn for $g_k$ is the only illegal turn for $f_k$.
  \item[b.] If $\rho$ were an iNP for $g$, then for each $k$:
      {\begin{itemize}
  \item[i.] $((g_{k-1}\circ\cdots\circ g_1)(\rho))_{\#}$ is an iNP for $f_k$.
  \item[ii.] $((g_{k-1}\circ\cdots\circ g_1)(\rho))_{\#}$ contains the prenull turn for $g_k$.
  \item[iii.] It is possible to write $\rho=\overline{\gamma_1}\gamma_2$ where, for each $j\in\{1,2\}$, we have $\gamma_j=\alpha_j\sigma_j$ with\\ 
      $\bullet$ $\sigma_j$ is $g$-legal,\\ 
      $\bullet$  $(g_{k-1}\circ\cdots\circ g_1)(\alpha_1)=(g_{k-1}\circ\cdots\circ g_1)(\alpha_2)$,\\ 
      $\bullet$  and $\{D(g_{k-1}\circ\cdots\circ g_1)(\sigma_1), D(g_{k-1}\circ\cdots\circ g_1)(\sigma_2)\}$ is the $g_k$-prenull turn $\tau_k$.
\end{itemize}}
\end{itemize}}
\end{prop}

\vskip10pt

\begin{proof} Throughout this proof we use $\simeq$ to denote homotopy equivalence relative to endpoints.

(a)  Suppose, for the sake of contradiction, that some $f_k$ has more than one illegal turn. Since the direction map $Df_k^L$ for $f_k^L$ is a composition of the $Dg_j$, this would imply that, with indices viewed mod $n$, some $g_j$ would have a prenull turn in which both directions are in the image of $Dg_{j-1}$. However, by assumption (2), this does not occur for any $g_j$.

(b-i) Let $\rho_k$ denote $((g_{k-1}\circ\cdots\circ g_1)(\rho))_{\#}$. Then by the definitions and the fact that $\rho$ is an iNP for $g$, was have 
$$f_k(\rho_k)\simeq f_k((g_{k-1}\circ\cdots\circ g_1)(\rho))\simeq (f_k\circ g_{k-1}\circ\cdots\circ g_1)(\rho)\simeq (g_{k-1}\circ\cdots\circ g_1\circ g)(\rho))\simeq$$
$$(g_{k-1}\circ\cdots\circ g_1)(g(\rho))\simeq (g_{k-1}\circ\cdots\circ g_1)(\rho)\simeq \rho_k.$$
\noindent So $\rho_k$ is an NP for $f_k$. 

By symmetry in the argument, we in fact have that if $\rho_k'$ is an iNP for $f_k$, then $((g_n\circ\cdots\circ g_k)(\rho_k'))_{\#}$ is a NP for $g$. Thus, a decomposition of $\rho_k$ into $>1$ iNP would yield a decomposition of $\rho$ into more than 1 NP (it is not difficult to see they are unique), contradicting that $\rho$ is an iNP.

(b-ii) If $\rho$ were an iNP for $g$, then by (b-i) we would have that $((g_{k-1}\circ\cdots\circ g_1)(\rho))_{\#}$ would be an iNP and hence would have an $f_k$ illegal turn (by Proposition \ref{P:PINP}). By (a) this must be the prenull turn for $g_k$.

(b-iii) Since $\rho$ is an iNP, by Proposition \ref{P:PINP}, there exist $g$-legal paths $\gamma_1$ and $\gamma_2$ so that $\overline{\gamma_1}\gamma_2$. We first show that for each $j\in\{1,2\}$ the path $(g_{k-1}\circ\cdots\circ g_1)(\gamma_j)$ is $f_k$-legal. Suppose for the sake of contradiction that for some $j\in\{1,2\}$ and $p>0$ we had that $f_k^p((g_{k-1}\circ\cdots\circ g_1)(\gamma_j))$ had cancellation. Then we would have that $g^{p+1}(\gamma_j)=(g_{n}\circ\cdots\circ g_k)(f_k^p((g_{k-1}\circ\cdots\circ g_1)(\gamma_j)))$ would necessarily also, contradicting the $g$-legality of $\gamma_j$.

Since subpaths of legal paths are legal, the result of (b-iii) then follows from (b-ii).
\end{proof}

\vskip10pt

\subsection{Whitehead graphs}{\label{ss:wg}}

\vskip1pt

Throughout this subsection, $g \from R_r \to R_r$ will be a PNP-free expanding irreducible train track map and $v$ will be the vertex of $R_r$. More general definitions can be found in \cite{hm11} or \cite{LoneAxes16}, with explanations in \cite{Thesis} of the reduction to our setting.

\vskip10pt

\begin{df}[Whitehead graphs \& index]{\label{d:WhiteheadGraphs}} The \emph{local Whitehead graph}
$LW(g)$ has a vertex for each direction at $v$ and an edge connecting the vertices corresponding to a pair of directions $\{d_1,d_2\}$ when $\{d_1,d_2\}\in \mT_{\infty}$. The \emph{stable Whitehead graph} $SW(g)$ is the subgraph of $LW(g)$ obtained by restricting to the vertices of $LW(g)$ corresponding to periodic directions.

If $g$ further represents a fully irreducible $\vphi \in \out$, then the \emph{ideal Whitehead graph $\mathcal{IW}(\vphi)$ of $\vphi$} is isomorphic to $SW(g)$. Justification of this being an outer automorphism invariant can be found in \cite{hm11, Thesis}. From the ideal Whitehead graph, one can obtain the \emph{rotationless index} $i(\vphi) :=  1-\frac{k}{2}$, where $k$ is the number of vertices of $\mathcal{IW}(\vphi)$.
\end{df}

\vskip10pt

The following lemma follows directly from the definitions:

\begin{lem}\label{lem:comp}
Let $\Gamma$ be a finite connected graph and let $g,h:\Gamma\to\Gamma$ be graph maps such that $g$ is surjective. Then
\[
\mT(h\circ g)=\mT(h)\cup Dh(\mT(g)).
\]
\end{lem}

We also need the following lemma, whose proof, while not explicitly given in either \cite[Lemma 3.7]{kp15} or \cite{Thesis}, is similar to the proofs in each. We include a proof for completeness.

\begin{lem}{\label{l:CombinedTurns}}
Let $n\ge 1$ and let $h_1, \cdots, h_n$ be train track maps on the rose $R_r$ (in the sense of Definition \ref{d:Maps} so that, in particular they are homotopy equivalences, and thus surjective). 


Then
$$\mT(h_n\circ\cdots\circ h_1)=[\mT(h_n)] \bigcup_{k=1}^{n-1} [D(h_n\circ\cdots\circ h_{k+1})(\mT(h_k))].$$
\end{lem}

\begin{proof}
We argue by induction on $n$. For $n=1$ the statement vacuously holds.

Suppose now that $n>1$ and the statement is known to hold for $n-1$. Thus 
\[
\mT(h_{n-1}\circ\cdots\circ h_1)=[\mT(h_{n-1})] \bigcup_{k=1}^{n-2} [D(h_{n-1}\circ\cdots\circ h_{k+1})(\mT(h_k))].
\]

Note that by assumption each $h_i:R_r\to R_r$ is surjective and hence so is $h_{n-1}\circ\cdots\circ h_1$.
Hence, by Lemma~\ref{lem:comp}, 
\[
\mT(h_n\circ\cdots\circ h_1)=\mT(h_n\circ (h_{n-1}\cdots\circ h_1))=[\mT(h_n)]  \cup Dh_n (\mT(h_{n-1}\circ\cdots\circ h_1)).
\]
  
Using the inductive hypothesis, we have
\begin{gather*}
Dh_n(\mT(h_{n-1}\circ\cdots\circ h_1))=Dh_n\left( [\mT(h_{n-1})] \bigcup_{k=1}^{n-2} [D(h_{n-1}\circ\cdots\circ h_{k+1})(\mT(h_k))]  \right)=\\
Dh_n(\mT(h_{n-1})) \bigcup_{k=1}^{n-2} Dh_n\left([D(h_{n-1}\circ\cdots\circ h_{k+1})(\mT(h_k))]\right)=\\
Dh_n(\mT(h_{n-1})) \bigcup_{k=1}^{n-2} \left([Dh_n\circ D(h_{n-1}\circ\cdots\circ h_{k+1})(\mT(h_k))]\right)=\\
\bigcup_{k=1}^{n-1} [D(h_n\circ\cdots\circ h_{k+1})(\mT(h_k))].
\end{gather*}
 
Hence
\[
\mT(h_n\circ\cdots\circ h_1)=[\mT(h_n)] \bigcup_{k=1}^{n-1} [D(h_n\circ\cdots\circ h_{k+1})(\mT(h_k))],
\]
as required. 
This completes the inductive step and the proof of the lemma.
\end{proof}

\vskip10pt

\subsection{Full irreducibility criterion.}

\vskip1pt

Proposition \ref{prop:FIC} is stated as such in \cite{stablestrata}. It is essentially \cite[Proposition 4.1]{IWGII}, with the added observation that a fully irreducible outer automorphism with a PNP-free train track representative is in fact ageometric (by definition). Kapovich~\cite{k14} has a related result.

\begin{prop}[\cite{stablestrata}](\emph{A Special Case of the Ageometric Full Irreducibility Criterion (FIC)})\label{prop:FIC}
Let $g\colon R_r \to R_r$ be a PNP-free, irreducible train track representative of $\vphi \in \out$. Suppose that $M(g)$ is Perron-Frobenius and that the local Whitehead graph $v$ at the vertex $v$ of $R_r$ is connected. Then $\vphi$ is an ageometric fully irreducible outer automorphism.
\end{prop}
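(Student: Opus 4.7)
The plan is to deduce the proposition from two pieces already in the literature: Kapovich's Full Irreducibility Criterion \cite{k14} (essentially \cite[Proposition 4.1]{IWGII}), which yields the ``fully irreducible'' conclusion, and Definition \ref{d:ageometric}, which upgrades this to ``ageometric'' once we use the PNP-free hypothesis on $g$. So my aim is mostly to verify that our hypotheses match those of the cited criterion and that the ageometric conclusion is formal.

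For the full irreducibility step, I would argue by contradiction. Suppose some positive power $\vphi^n$ preserves the conjugacy class $[A]$ of a proper nontrivial free factor $A\le F_r$. After replacing $g$ by an iterate (which does not affect any of the three hypotheses: PF, connectedness of $LW$, PNP-freeness, the last by Corollary 4.43 of \cite{fh11} cited earlier), one applies Bestvina--Handel relative train track theory to $\vphi^n$ with its invariant free factor system to extract a reducing feature of $g$ itself. The Perron--Frobenius hypothesis on $M(g)$ forbids any proper nonempty $g$-invariant subgraph of $\Gamma$ carrying $\pi_1$, so the reduction cannot be realized by a proper invariant subgraph. A careful inspection of the remaining possibilities shows that a reducing free factor in a PF irreducible train track map must then be witnessed either by a disconnection of some local Whitehead graph $LW(g,v)$ (ruled out by hypothesis) or by the existence of a periodic Nielsen path (ruled out by the PNP-free hypothesis). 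This three-way ``structural dichotomy'' is precisely the substance of Kapovich's criterion.

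Once $\vphi$ is known to be fully irreducible, ageometricity is immediate: $g$ itself is a PNP-free train track representative, which is exactly Definition \ref{d:ageometric}.

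The principal obstacle is the trichotomy in the second paragraph: rigorously arguing that any obstruction to full irreducibility in a PF irreducible train track map must manifest as an invariant subgraph, a disconnected local Whitehead graph, or a PNP. Since this is exactly what is proved in \cite{IWGII} and \cite{k14}, I would not reproduce the argument; instead I would spend the writeup checking the hypothesis match (especially noting that our local Whitehead graphs at the unique vertex of $R_r$ correspond to the ``vertex'' Whitehead graphs appearing in the cited criterion, and that PNP-freeness of $g$ subsumes all the Nielsen-path hypotheses there) and explicitly stating the ageometric upgrade.
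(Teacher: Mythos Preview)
Your proposal is correct and matches the paper's own treatment. The paper does not prove this proposition; it simply cites it from \cite{akp17}, noting that it is essentially \cite[Proposition 4.1]{IWGII} (with \cite{k14} as a related result) together with the observation that a fully irreducible with a PNP-free train track representative is ageometric by Definition~\ref{d:ageometric}---which is exactly the two-step structure you describe.
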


\vskip10pt

\section{Fold lines in Outer space}{\label{s:Geodesics}}

\vskip10pt

For each integer $r\geq 3$, we use $\os$ to denote the rank-$r$ Culler-Vogtmann Outer space (as defined in \cite{cv86}). One may consult \cite{ParkCity14} for a nice survey on the topic.

\vskip10pt

\subsection{Stallings fold decompositions}{\label{ss:StallingsFoldDecompositions}}

\vskip1pt

\begin{df}[Stallings fold decomposition]{\label{d:StallingsFoldDecomposition}} \cite{s83}. Let $g \colon \Gamma \to \Gamma'$ be a surjective homotopy equivalence graph map. Let $e_1' \subset e_1$ and $e_2' \subset e_2$ be maximal, initial, nontrivial subsegments of edges $e_1$ and $e_2$ of $\Gamma$ emanating from a common vertex and satisfying that $g(e_1')=g(e_2')$ as edge paths and that the terminal endpoints of $e_1'$ and $e_2'$ are distinct points in $g^{-1}(V\Gamma')$. Redefining $\Gamma$ to have vertices at the endpoints of $e_1'$ and $e_2'$ if necessary, one can obtain a graph $\Gamma_1$ by identifying the points of $e_1'$ and $e_2'$ that have the same image under $g$, a process called \emph{folding}.
Stallings \cite{s83} showed that such a homotopy equivalence graph map $g$ factors as a composition of folds and a final homeomorphism, i.e. has a \emph{Stallings fold decomposition}. One way to view the process, important for the proof of Lemma \ref{lem:multipl}, is to ``label'' each edge $e$ in $\G$ by its image path $g(e)$ and to iteratively perform folds of identically labeled edge segments emanating from a common vertex.
\end{df}

In \cite{s89}, Skora interpreted a Stallings fold decomposition for a graph map homotopy equivalence $g\colon \Gamma \to \Gamma'$ as a sequence of folds performed continuously. For a train track map $g$, by repeating this procedure, one obtains a geodesic in $\os$ called a \emph{periodic fold line} for $g$. It is proved in \cite[Lemma 2.27]{stablestrata} that, if $g$ is a train track map, then the periodic fold line is in fact a geodesic.

\begin{rk} In general, each fully irreducible $\vphi\in\os$ has many train track  representatives, each of which can have several distinct Stallings fold decompositions (and hence several associated periodic fold lines).
\end{rk}

\vskip10pt

\subsection{Lone axis fully irreducible outer automorphism}{\label{ss:LoneAxisFullyIrreducibles}}

\vskip1pt

In \cite{hm11}, Handel and Mosher define the axis bundle for an ageometric fully irreducible outer automorphism $\vphi$. \cite{hm11} contains three equivalent definitions. Definition \ref{d:ab} below provides a fourth equivalent definition (proving the equivalence of this definition to the others is straight-forward).

\begin{df}[Axis bundle $\mA_{\vphi}$]{\label{d:ab}}
Let $\vphi\in \out$ be an ageometric fully irreducible outer automorphism. Then $\mA_{\vphi}$ is the closure of the union of the images of the periodic fold lines for train track representatives of positive powers of $\vphi$.
\end{df}

We say that an ageometric fully irreducible $\vphi \in \out$ is a \emph{lone axis fully irreducible} if the axis bundle $\mA_{\vphi}$ consists of a single fold line (which is then necessarily $\vphi$-periodic). Note that if $\vphi \in \out$ is a lone axis fully irreducible, then so is every positive power of $\vphi$.

The following theorem is proved in \cite{LoneAxes16}. The statement given here is in fact a combination of \cite[Theorem 4.6]{LoneAxes16}, \cite[Theorem 4.7]{LoneAxes16}, and \cite[Lemma 4.5]{LoneAxes16}.

\begin{thm}[\cite{LoneAxes16}]{\label{t:uniqueaxis}} Let  $\vphi \in \out$ be an ageometric fully irreducible outer automorphism. Then $\vphi$ is a lone axis fully irreducible outer automorphism if and only if ~\\
\vspace{-5mm}
\begin{enumerate}
\item the rotationless index satisfies $i(\vphi) = \frac{3}{2}-r$ and 
\item no component of the ideal Whitehead graph $\mathcal{IW}(\vphi)$ has a cut vertex.
\end{enumerate}
In this case the axis bundle $\mA_{\vphi}$ is a unique $\vphi$-periodic fold line; in particular, $\mA_{\vphi}$ is the unique periodic fold line constructed from any train track representative of each positive power of $\vphi$.
\end{thm}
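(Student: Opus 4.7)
The plan is to characterize when the axis bundle $\mA_{\vphi}$, which by Definition~\ref{d:ab} is the closure of the union of images of periodic fold lines for train track representatives of positive powers of $\vphi$, collapses to a single periodic fold line. The overall strategy is local-to-global: analyze the combinatorial flexibility in choosing a train track representative and in choosing a Stallings fold decomposition, and express that flexibility in terms of data visible in $\mathcal{IW}(\vphi)$.

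First I would establish the role of the index condition. Using the Handel--Mosher/Gaboriau--Jaeger--Levitt--Lustig rotationless index sum, the index $i(\vphi) = \tfrac{3}{2} - r$ is the extremal admissible value for an ageometric fully irreducible in $Out(F_r)$. Since $i(\vphi) = 1 - k/2$ for $k$ the vertex count of $\mathcal{IW}(\vphi)$, this forces $k = 2r - 1$; in particular, for a rose representative $g\from R_r\to R_r$, exactly one direction at the unique vertex fails to be periodic. This severely restricts the possible train track representatives of positive powers of $\vphi$ whose fold lines contribute to $\mA_\vphi$: up to shift and rotation, the periodic gate structure is forced. Conversely, if $i(\vphi)>\tfrac{3}{2}-r$, there are ``extra'' non-periodic directions whose differing organizations into gates allow one to construct train track representatives with distinct fold lines, so $\mA_\vphi$ contains more than one line.

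Next I would treat the cut vertex condition. Fix a rose train track representative $g$ and consider a component $C$ of $\mathcal{IW}(\vphi)\cong SW(g)$ containing a cut vertex $d$. A cut vertex corresponds to a periodic direction through which all the ``turns taken'' by $g^k$ within $C$ must pass; by splitting $C$ at $d$ one obtains two distinct ways of grouping the incident periodic directions into gates at the vertex realizing $d$. Using Lemma~\ref{l:CombinedTurns} to track which turns remain taken under compositions of folds, these two groupings produce combinatorially distinct Stallings fold decompositions of some positive power of $\vphi$, whose Skora-realized fold lines project to distinct lines in $\mA_\vphi$. Absence of cut vertices in every component should block this construction and leave the gate structure rigid.

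The hard direction, and the main obstacle, is the implication ($\Leftarrow$): given both conditions, show $\mA_\vphi$ is a single line. The idea is to prove that any train track representative $h$ of any positive power $\vphi^n$ admits a Stallings fold decomposition whose associated periodic fold line in $\os$ coincides, up to parameter shift, with that of $g$. Given condition~(1), the PNP-freeness granted by ageometricity together with the extremal index constraint should force $h$ to have the same underlying rose and the same combinatorial transition matrix as an iterate of $g$, up to a graph automorphism permuting edges. Given condition~(2), the rigidity of gate structures forces the Stallings folds to be performed in an essentially unique order, producing the same periodic fold line. The delicate step will be controlling the interaction between the folding order and the marking, to conclude that no two representatives give translates of distinct axes; here Theorem~\ref{t:uniqueaxis}(\cite[Thm.\ 4.6, Thm.\ 4.7, Lem.\ 4.5]{LoneAxes16}) assembles the needed pieces, and I would model my argument on their stratification of $\mA_\vphi$ by the ``ideal decomposition'' of each component of $\mathcal{IW}(\vphi)$.
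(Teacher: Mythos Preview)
The paper does not prove this theorem. It is quoted from \cite{LoneAxes16} (explicitly as a combination of \cite[Theorem~4.6, Theorem~4.7, Lemma~4.5]{LoneAxes16}) and used as a black box; there is no proof in the paper to compare your proposal against.

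As for the proposal itself, it is a reasonable heuristic outline of why the two conditions are relevant, but it is not a proof, and several steps would not go through as written. Most seriously, in the $(\Leftarrow)$ direction you assume a rose representative and then assert that the extremal index together with PNP-freeness ``should force $h$ to have the same underlying rose and the same combinatorial transition matrix as an iterate of $g$, up to a graph automorphism.'' Neither claim follows: the axis of a lone axis fully irreducible need not pass through roses at all, and even when it does, different points on the axis give genuinely different train track maps (not conjugate by graph automorphisms). The actual argument in \cite{LoneAxes16} does not proceed by pinning down a single combinatorial representative; it analyzes the stratification of $\mA_\vphi$ via branch points of the stable tree and the structure of \emph{weak train tracks}, showing that conditions (1) and (2) force this stratification to be one-dimensional. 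Your cut-vertex discussion for the $(\Rightarrow)$ direction is closer in spirit, but the mechanism you describe (``two groupings of periodic directions into gates'') is not how the branching of $\mA_\vphi$ arises; rather, a cut vertex in a component of $\mathcal{IW}(\vphi)$ allows a nontrivial splitting of an ideal edge, producing a $2$-parameter family of weak train tracks and hence a $2$-dimensional piece of $\mA_\vphi$. If you want to reconstruct the proof, you should work directly with the axis bundle description in \cite{hm11} and the splitting/collapsing moves on weak train tracks, rather than with Stallings fold decompositions of a fixed representative.
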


\vskip20pt

\section{The Automorphisms}\label{s:autos}

\vskip10pt

We continue assuming that $r\ge 3$, that $F_r=F(x_1, \dots, x_r)$, and that $R_r$ is the $r$-petaled rose. We may blur the distinction between an automorphism and its induced map on the rose.

\vskip10pt

\begin{df}[Full word]
Recall that a positive word is one that can be written as a product of generators $x_1,\dots, x_r$ without use of any inverses $x_j^{-1}$. We say that a positive word $w(x_2,\dots, x_r)$ is \emph{full} if for all $2\le i,j\le r$ the word $x_ix_j$ occurs as a subword of $w$. That is, a positive word $w(x_2,\dots, x_r)$ is full if and only if it contains each turn $\{d_1,d_2\}$ with
$d_1 \in\{x_2, \dots, x_r\}$ and
$d_2 \in\{\ol{x_2}, \dots, \ol{x_r}\}$.
\end{df}

Note that for every full word $w(x_2,\dots,x_r)$ we have $|w|\ge 2(r-1)\ge r\ge 2$.

\begin{df}[$g_w$]\label{d:gw} Let $r\ge 3$ and let $w(x_2, \dots, x_r)$ be a full positive word in $x_2, \dots, x_r$ starting with $x_{r-1}$ and ending with $x_2$. We then define graph maps $g_w:R_r\to R_r$ by:

~\\
\vspace{-3mm}
$$
g_w(x_k) =
\begin{cases}
x_{k+1}\quad if \quad 1\le k \le r-1 \\
 x_1 w(x_2, \dots, x_r)\quad if \quad k=r
\end{cases}
$$
\end{df}

\begin{df}[$g_k$, $g_{k,i}$]\label{d:gk} For each $\veps\in\{0,6\}$, we define the following automorphisms (all generators whose images are not explicitly given are fixed):
\[ g_{1+\veps}: x_1 \mapsto x_1 x_r, \quad
g_{2+\veps}: x_r \mapsto x_r x_1, \quad
g_{3+\veps}: x_r \mapsto x_r x_{r-1}
\]
\[
g_{4+\veps}: x_{r-1}\mapsto x_{r-1} x_r, \quad
g_{5+\veps}: x_{r-1}\mapsto x_{r-1} x_1, \quad
g_{6+\veps}: x_1\mapsto x_1 x_{r-1}.
\]

Denoting the identity map on $R_r$ by $id_{R_r}$, for each $1 \le i, k\le 12$, let
$$
g_{k,i} =
\begin{cases}
g_k \circ \cdots \circ g_i \text{ if } k>i \\
g_k  \text{ if } i=k \\
id_{R_r} \text{ if } i>k
\end{cases}
$$
\end{df}

\begin{rk} The sequence of $g_i$ was constructed as a loop in an AM diagram. The interested reader may find the definition of an AM diagram (called an $\mathcal{ID}$ diagram in later papers) in \cite{Thesis} Chapter 9. We used $(g_6\circ \dots \circ g_1)^2$ to fix periodic directions, making computations and arguments simpler.
\end{rk}

\vskip10pt

\begin{df}[$g$]\label{df:g}
Choose a full positive word $w(x_2, \dots, x_r)$ starting with $x_{r-1}$ and ending with $x_2$ and put 
\[
g:=g_w\circ g_{12,1}=g_w\circ g_{12}\circ \dots \circ g_1:R_r\to R_r.
\]

\end{df}

\begin{rk}[Compositions are train track maps]\label{r:tt} Notice that, since each $g_w$ and each $g_i$ represent positive automorphisms of $F_r$, any composition of them is also a positive automorphism. Hence, any such composition induces a train track map on the rose $R_r$ representing an outer automorphism $\phi_w\in \out$. Moreover, the definition of $g$ implies that for $i=1,\dots, r$ we have $|g^r(x_i)|\ge r$ and the word $g^r(x_i)$ involves each of the generators $x_1,\dots, x_r$ of $F_r$. Thus $g:R_r\to R_r$ is an expanding irreducible train track map.
\end{rk}

We record here a table of data about the $g_k$ used in proofs that follow.

\begin{table}[h]
\begin{tabular}{|l|l|l|l|l|l|l|}
\hline 
 &  &  &  &  & &  \\
 & \quad $g_1$ & \quad $g_2$ & \quad $g_3$ & \quad $g_4$ & \quad $g_5$ & \quad $g_6$  \\
 &  &  &  &  & &  \\
 & $x_1 \mapsto x_1 x_r$ & $x_r \mapsto x_r x_1$  & $x_r \mapsto x_r x_{r-1}$ & $x_{r-1}\mapsto x_{r-1} x_r$ & $x_{r-1}\mapsto x_{r-1} x_1$ & $x_1\mapsto x_1 x_{r-1}$ \\
 &  &  &  &  & &  \\
 \hline 
 &  &  &  &  & &  \\
\text{Prenull turn} & $\{\ol{x_1},\ol{x_r}\}$ & $\{\ol{x_1},\ol{x_r}\}$ & $\{\ol{x_{r-1}},\ol{x_r}\}$ & $\{\ol{x_{r-1}},\ol{x_r}\}$ & $\{\ol{x_1},\ol{x_{r-1}}\}$ & $\{\ol{x_1},\ol{x_{r-1}}\}$ \\
 &  &  &  &  & &  \\
\hline 
 &  &  &  &  & &  \\
\text{New turn} & $\{\ol{x_1},{x_r}\}$ & $\{\ol{x_r},{x_1}\}$  & $\{\ol{x_r}, x_{r-1}\}$ & $\{\ol{x_{r-1}}, x_r\}$ & $\{\ol{x_{r-1}}, x_1 \}$ & $\{\ol{x_1}, x_{r-1} \}$ \\
\text{created} &  &  &  &  & &  \\
 &  &  &  &  & &  \\
\hline 
 &  &  &  &  & &  \\
\text{Direction map} & $\ol{x_1}\mapsto \ol{x_r}$  & $\ol{x_r}\mapsto \ol{x_1}$  & $\ol{x_r}\mapsto \ol{x_{r-1}}$ & $\ol{x_{r-1}}\mapsto \ol{x_r}$ & $\ol{x_{r-1}}\mapsto \ol{x_1}$ & $\ol{x_1}\mapsto \ol{x_{r-1}}$ \\
 &  &  &  &  & &  \\
\hline 
 &  &  &  &  & &  \\
\text{Directions} & \quad $\ol{x_1}$ & \quad $\ol{x_r}$ & \quad $\ol{x_r}$ & \quad $\ol{x_{r-1}}$ & \quad $\ol{x_{r-1}}$ & \quad $\ol{x_1}$ \\
\text{not in image} &  &  &  &  & &  \\
 &  &  &  &  & &  \\
\hline 
\end{tabular}
\end{table}

\begin{lem}\label{l:WGs} For $g$ as above, $LW(g)$ consists of an edge connecting the pair $\{\ol{x_1},x_{r-1}\}$ together with the complete bipartite graph on the partition $\{\{x_1,\dots, x_r\},\{\ol{x_2},\dots, \ol{x_r}\}\}$.
\end{lem}

\vskip10pt

\begin{proof}
We begin with the following observations:
\begin{enumerate}
\item $$\mT_{\infty}(g)=\bigcup_{\ell\ge 1}[D(g^{\ell-1}\circ g_w)(\mT(g_{12,1}))\cup Dg^{\ell-1}(\mT(g_w))].$$
\item $Dg_w$ is defined by $x_k \mapsto x_{(k+1 \text{ mod r})}$ for $1\le k\leq r$, and $\ol{x_k} \mapsto \ol{x_{k+1}}$ for $1\le k\leq r-1$, and $\ol{x_r} \mapsto \ol{x_2}$.
\item $D(g_{12}\circ\cdots\circ g_1)$ is the identity map apart from $\ol{x_1}\mapsto\ol{x_r}$.
\item $Dg$ is defined by $x_k \mapsto x_{(k+1 \text{ mod r})}$ for $1\le k\leq r$, and $\ol{x_k} \mapsto \ol{x_{k+1}}$ for $1\le k\leq r-1$ and $\ol{x_r} \mapsto \ol{x_2}$.
\item $\mT(g_w)=\{ \{d_1,d_2\} \mid d_1\in \{x_2,\dots, x_r\} ~\&~ d_2\in \{\ol{x_2},\dots, \ol{x_r}\}\}\cup \{\{\ol{x_1},x_{r-1}\}\}$.
\item $\mT(g_{12,1})=\{ \{d_1,d_2\} \mid d_1\in \{x_1,x_{r-1}, x_r\} ~\&~ d_2\in \{\ol{x_{r-1}}, \ol{x_r}\}\}\cup \{\{\ol{x_1},x_{r-1}\}\}$ (see, for example, \cite{p15}).
\end{enumerate}

Since $Dg(\{\ol{x_{k-1}},x_r\})=\{\ol{x_k},x_1\}$ for all $2\le k \le r$ and $\{\ol{x_1},x_{r-1}\}\in\mT(g_w)$, we have
$$\{ \{x_1,d\} \mid d\in \{\ol{x_2},\dots, \ol{x_r}\}\}\cup \{\{\ol{x_1},x_{r-1}\}\}\subset \bigcup_{\ell\ge 1}[ Dg^{\ell-1}(\mT(g_w))]\subset \mT_{\infty}.$$

Together with $\mT(g_w)=\{ \{d_1,d_2\} \mid d_1\in \{x_2,\dots, x_r\} ~\&~ d_2\in \{\ol{x_2},\dots, \ol{x_r}\}\}\cup \{\{\ol{x_1},x_{r-1}\}\}\subset \mT_{\infty}$, this says
$$\{ \{d_1,d_2\} \mid d_1\in \{x_1,\dots, x_r\} ~\&~ d_2\in \{\ol{x_2},\dots, \ol{x_r}\}\}\cup \{\{\ol{x_1},x_{r-1}\}\}\subset \mT_{\infty}.$$

Since $\ol{x_1}$ is not in the image of $Dg$ or $Dg_w$, we then have
$$\mT_{\infty}=\{ \{d_1,d_2\} \mid d_1\in \{x_1,\dots, x_r\} ~\&~ d_2\in \{\ol{x_2},\dots, \ol{x_r}\}\}\cup \{\{\ol{x_1},x_{r-1}\}\}.$$
\qedhere
\end{proof}

\vskip15pt

In the proof of Lemma \ref{l:fi}, the procedure for showing that no iNPs exist is similar to that in \cite{IWGII}, \cite[Example 3.4]{hm11}, or \cite{p15}. For completion, we included proof of its validity in Proposition \ref{P:AllLemmas}. 

\vskip10pt

\begin{lem}\label{l:fi} Suppose that $w(x_2, \dots, x_r)$ is a full positive word starting with $x_{r-1}$ and ending with $x_2$.
Then $g=g_w\circ g_{12,1}$ represents an ageometric fully irreducible outer automorphism of $F_r$.
\end{lem}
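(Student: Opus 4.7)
My plan is to apply the Ageometric Full Irreducibility Criterion (Proposition \ref{prop:FIC}) to $g:=g_w\circ g_{12,1}$. I must verify: (i) $g$ is a PNP-free irreducible train track representative; (ii) the transition matrix $M(g)$ is Perron--Frobenius; and (iii) every local Whitehead graph of $g$ is connected. Three of these four conditions come essentially for free. Since $g$ is a composition of positive automorphisms, Remark \ref{r:tt} gives that $g$ induces a train track map on $R_r$. Lemma \ref{l:WGs} identifies $LW(g)$ as the complete bipartite graph on $\{x_1,\dots,x_r\}$ and $\{\ol{x_2},\dots,\ol{x_r}\}$ together with one extra edge $\{\ol{x_1},x_{r-1}\}$; this is plainly connected for $r\ge 3$, giving (iii). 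For (ii), fullness of $w$ forces $g_w(x_r)=x_1w$ to contain every generator, and a routine Perron--Frobenius argument (using this together with the cyclic shift $g_w\colon x_k\mapsto x_{k+1}$ for $1\le k\le r-1$, and the fact that $M(g_{12,1})$ is nonnegative with positive diagonal) shows that some power of $M(g)$ is strictly positive. Irreducibility of $g$ as a graph map then follows from $M(g)$ being Perron--Frobenius.

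The main obstacle is PNP-freeness, for which I would follow the strategy of \cite{IWGII,p15} and \cite[Example 3.4]{hm11}. The first step is to pin down the illegal turns of $g$ from the direction map. The proof of Lemma \ref{l:WGs} records that $Dg$ cyclically permutes the positive directions, $x_1\to x_2\to\cdots\to x_r\to x_1$, while on negative directions one has $\ol{x_1}\mapsto\ol{x_2}$, $\ol{x_r}\mapsto\ol{x_2}$, and $\ol{x_k}\mapsto\ol{x_{k+1}}$ for $2\le k\le r-1$. Because $g$ is positive, $Dg$ preserves the sign of a direction. Thus the only pair of directions ever identified under iteration of $Dg$ is $\{\ol{x_1},\ol{x_r}\}$: this is the unique illegal turn of $g$, and in fact a $g$-prenull turn. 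By Proposition \ref{P:PINP}, any hypothetical iNP $\rho$ for $g$ decomposes as $\ol{\rho_1}\rho_2$ where $\rho_1,\rho_2$ are legal paths from the unique vertex of $R_r$ with initial directions (in some order) $\ol{x_1}$ and $\ol{x_r}$.

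To rule out such an iNP, I would examine the cancellation in $g(\ol{\rho_1}\rho_2)$ at the central illegal turn. This cancellation is controlled by the longest common suffix of $g(x_1)$ and $g(x_r)$, and iteratively, a PNP can exist only if these cancellations exactly absorb the stretch of $g^R$ on $\rho_1,\rho_2$ for some $R\ge 1$. The specific twelve-term composition $g_{12,1}$ of Definition \ref{d:gk} was built using the $\mathcal{ID}$-diagram technology of \cite{Thesis} precisely to force this to fail: running the finitely many candidate iNPs enumerated by the $\mathcal{ID}$-diagram for $g_w\circ g_{12,1}$ through the factorization $g_w\circ g_{12}\circ\cdots\circ g_1$, one verifies (as in \cite{IWGII,p15}) that at each stage the cancellation is strictly less than what would be needed to reproduce $\rho_1,\rho_2$. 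I expect this PNP-prevention check to occupy the bulk of the argument. Once it is in hand, Proposition \ref{prop:FIC} immediately concludes that $g$ represents an ageometric fully irreducible outer automorphism of $F_r$.
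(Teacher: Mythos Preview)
Your plan is essentially the paper's proof: it too invokes Remark~\ref{r:tt} for the train track property, Lemma~\ref{l:WGs} for connectedness of $LW(g)$, a quick PF check, and then devotes the bulk of the work to PNP-freeness via the factorization through the $g_i$. The paper executes that last step exactly as you outline, starting from the unique illegal turn $\{\ol{x_1},\ol{x_r}\}$ and pushing a hypothetical iNP through $g_1,\,g_{2,1},\,\dots,\,g_{5,1}$ (determining the successive edges $e_2',e_2,e_3,e_3'$ along the way), at which point the turn that would need to be prenull for $g_7$ is $\{\ol{x_{r-1}},\ol{x_r}\}$ rather than the actual illegal turn $\{\ol{x_1},\ol{x_r}\}$, giving the contradiction.
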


\vskip10pt

\begin{proof}
By Remark \ref{r:tt}, $g$ is an expanding irreducible train track map. The transition matrix $M(g)$ is Perron-Frobenius since the image of each edge under $g^r$ contains every edge (including itself).

To show that $g$ represents an ageometric fully irreducible outer automorphism, we prove that $g$ additionally satisfies the remaining conditions of Proposition \ref{prop:FIC}, i.e $LW(g)$ is connected and $g$ is PNP-free. Since $LW(g)$ is connected by Lemma \ref{l:WGs}, we now show that $g$ has no PNPs.

Suppose for the sake of contradiction that $g$ has a PNP. Taking the rotationless power $g^R$, this gives an NP. Let $\rho = \overline{\rho_1} \rho_2$ be an iNP for $g^R$ in the decomposition of $\rho$ into iNPs, where $\rho_1 = e_1\dots e_m$ and $\rho_2 = e_1'\dots e_{m'}'$ are edge paths (with possibly $e_m$ and $e_{m'}'$ being partial edges) and with illegal turn $T = \{D(e_1), D(e_1')\}$ $=\{d_1, d_1'\}$. Notice first that each turn of $\rho_1$ and of $\rho_2$ must be in $\mT_{\infty}(g^R)$. And that $\{\ol{x_1},x_r\}\notin \mT_{\infty}(g)$. 

Considering $g_w$ as $g_{13}$ we are in the situation of Proposition \ref{P:AllLemmas} and rely heavily on results contained within it. Please also reference the table of \S \ref{s:autos}. In particular, by referencing the table, one can see that each $g_{1}, \dots, g_{12}$ has precisely one prenull turn $\tau_k$ and that one direction in $\tau_k$ is not in the image of $Dg_{k-1}$. By inspection of the direction map, one can see that the unique prenull turn for $g_{13}=g_w$ is $\tau_{13}=\{\ol{x_1},\ol{x_r}\}$ and that the direction $\ol{x_1}$ of $\tau_1$ is not in the image of $Dg_{13}=Dg_w$. Further, by the table one can see that direction $\ol{x_1}$ in $\tau_{13}$ is not in the image of $Dg_{12}=Dg_{6}$.

Since $\{\ol{x_1},\ol{x_r}\}$ is the only illegal turn for $g^R$, it would be necessary that (without loss of generality) $e_1=\ol{x_1}$ and $e_1'=\ol{x_r}$. Now:
$$g_1(\rho_1)=\bar{x_r} \bar{x_1}g_1(e_2)\dots g_1(e_m)
$$
$$
g_1(\rho_2)=\ol{x_r} g_1(e_2')\dots g_1(e_{m}').
$$
For $\rho$ to be an iNP, we need that $\{Dg_1(e_2'),\ol{x_1}\}$ is either degenerate or the prenull turn $\{\ol{x_1},\ol{x_r}\}$ for $g_2$. Since $\ol{x_1}$ is not in the image of $Dg_1$, this leaves that $Dg_1(e_2')=\ol{x_r}$. This would happen precisely when either $e_2'=\ol{x_1}$ or $e_2'=\ol{x_r}$. However, if $e_2'=\ol{x_1}$, then $\rho_2$ would contain turns not in $\mT_{\infty}(g)$. Hence, $e_2'=\ol{x_r}$.
 Now:
$$g_{2,1}(\rho_1)=\bar{x_1}\bar{x_r} \bar{x_1}g_{2,1}(e_2)\dots g_{2,1}(e_m)
$$
$$
g_{2,1}(\rho_2)=\bar{x_1}\bar{x_r}\bar{x_1}\bar{x_r} g_{2,1}(e_3')\dots g_{2,1}(e_{m}').
$$
So we need that $\{Dg_{2,1}(e_2),\ol{x_r}\}$ is either degenerate or the prenull turn $\{\ol{x_r},\ol{x_{r-1}}\}$ for $g_3$. Since $\ol{x_r}$ is not in the image of $Dg_{2,1}$, this leaves that $Dg_{2,1}(e_2)=\ol{x_{r-1}}$, i.e. $e_2=\ol{x_{r-1}}$.
 Now:
$$g_{3,1}(\rho_1)=\bar{x_1}\ol{x_{r-1}}\bar{x_r} \bar{x_1}  \ol{x_{r-1}}g_{3,1}(e_3)\dots g_{3,1}(e_m)
$$
$$
g_{3,1}(\rho_2)=
\bar{x_1}\ol{x_{r-1}}\bar{x_r}\bar{x_1}\ol{x_{r-1}}\bar{x_r}
g_{3,1}(e_3')\dots g_{3,1}(e_{m}').
$$
So we need that $\{Dg_{3,1}(e_3),\ol{x_r}\}$ is either degenerate or the prenull turn $\{\ol{x_r},\ol{x_{r-1}}\}$ for $g_4$. Since $\ol{x_r}$ is not in the image of $Dg_{3,1}$, this leaves that $Dg_{3,1}(e_3)=\ol{x_{r-1}}$, i.e. $e_3=\ol{x_{r-1}}$.
 Now:
$$g_{4,1}(\rho_1)=
\bar{x_1}\bar{x_r}\ol{x_{r-1}}\bar{x_r} \bar{x_1}\bar{x_r}\ol{x_{r-1}}
\bar{x_r}\ol{x_{r-1}}
g_{4,1}(e_4)\dots g_{4,1}(e_m)
$$
$$
g_{4,1}(\rho_2)=
\bar{x_1}\bar{x_r}\ol{x_{r-1}}\bar{x_r}\bar{x_1}\bar{x_r}\ol{x_{r-1}}\bar{x_r}
g_{4,1}(e_3')\dots g_{4,1}(e_{m}').
$$
So we need that $\{Dg_{4,1}(e_3'),\ol{x_{r-1}}\}$ is either degenerate or the prenull turn $\{\ol{x_1},\ol{x_{r-1}}\}$ for $g_5$. Since $\ol{x_{r-1}}$ is not in the image of $Dg_{4,1}$, this leaves that $Dg_{4,1}(e_3')=\ol{x_1}$, i.e. either $e_3'=\ol{x_r}$ or $e_3'=\ol{x_1}$. However, if $e_3'=\ol{x_1}$, then $\rho_2$ would contain turns not in $\mT_{\infty}(g)$. Hence, $e_3'=\ol{x_r}$.
 Now:
$$g_{5,1}(\rho_1)=
\bar{x_1}\bar{x_r}\bar{x_1}\ol{x_{r-1}}\bar{x_r} \bar{x_1}\bar{x_r}\bar{x_1}\ol{x_{r-1}}
\bar{x_r}\bar{x_1}\ol{x_{r-1}}
g_{5,1}(e_4)\dots g_{5,1}(e_m)
$$
$$
g_{5,1}(\rho_2)=
\bar{x_1}\bar{x_r}\bar{x_1}\ol{x_{r-1}}\bar{x_r}\bar{x_1}\bar{x_r}
\bar{x_1}\ol{x_{r-1}}\bar{x_r}
\bar{x_1}\bar{x_r}\bar{x_1}\ol{x_{r-1}}\bar{x_r}
g_{5,1}(e_4')\dots g_{5,1}(e_{m}').
$$
This tells us that the prenull turn for $g_6$ would have to be $\{\ol{x_{r-1}},\ol{x_r}\}$, but the prenull turn for $g_6$ is $\{\ol{x_1},\ol{x_{r-1}}\}$.
Thus, we have reached a contradiction and $g$ can have no PNPs.
\qedhere
\end{proof}

\vskip20pt

\begin{lem}\label{l:IWGs} Suppose that $w(x_2, \dots, x_r)$ is a full positive word in $x_2, \dots, x_r$ starting with $x_{r-1}$ and ending with $x_2$. Let $g:=g_w\circ g_{12,1}$ represent $\vphi_w\in \out$. Then $\mathcal{IW}(\vphi_w)$ is the complete bipartite graph on the partition $\{\{x_1,\dots, x_r\},\{\ol{x_2},\dots, \ol{x_r}\}\}$.
\end{lem}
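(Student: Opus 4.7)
The plan is to leverage the description of $LW(g)$ already obtained in Lemma~\ref{l:WGs} and then identify exactly which vertices of $LW(g)$ survive into the stable Whitehead graph $SW(g)$. Since $g$ represents the fully irreducible $\vphi_w$ (by Lemma~\ref{l:fi}), Definition~\ref{d:WhiteheadGraphs} tells us $\mathcal{IW}(\vphi_w)\cong SW(g)$, and $SW(g)$ is obtained from $LW(g)$ by restricting to the vertices corresponding to periodic directions at the vertex $v$ of $R_r$. So the task reduces to classifying the periodic directions of $Dg$.

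Next I would read off the action of $Dg$ from the formula for $g=g_w\circ g_{12,1}$. As recorded in the proof of Lemma~\ref{l:WGs}, $Dg$ sends $x_k\mapsto x_{k+1\,(\mathrm{mod}\,r)}$ for $1\le k\le r$, $\ol{x_k}\mapsto \ol{x_{k+1}}$ for $1\le k\le r-1$, and $\ol{x_r}\mapsto \ol{x_2}$. In particular, $Dg$ cyclically permutes $\{x_1,\dots,x_r\}$ and cyclically permutes $\{\ol{x_2},\dots,\ol{x_r}\}$, so every direction in $\{x_1,\dots,x_r\}\cup\{\ol{x_2},\dots,\ol{x_r}\}$ is periodic. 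On the other hand, $\ol{x_1}$ does not appear anywhere in the image of $Dg$, and hence cannot be periodic. Therefore, the set of periodic directions is exactly $\{x_1,\dots,x_r\}\cup\{\ol{x_2},\dots,\ol{x_r}\}$.

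Finally, I would combine this with the description of $LW(g)$ from Lemma~\ref{l:WGs}. That lemma says $LW(g)$ is the complete bipartite graph on the partition $\{\{x_1,\dots,x_r\},\{\ol{x_2},\dots,\ol{x_r}\}\}$ together with one additional edge $\{\ol{x_1},x_{r-1}\}$. Deleting the vertex $\ol{x_1}$ (the only non-periodic direction) removes precisely the extra edge $\{\ol{x_1},x_{r-1}\}$ and leaves the complete bipartite graph on $\{\{x_1,\dots,x_r\},\{\ol{x_2},\dots,\ol{x_r}\}\}$, proving the claim. No step here looks to be a serious obstacle: the only mild subtlety is verifying that $\ol{x_1}$ truly lies outside the image of $Dg$ (equivalently, that no edge in the image word of $g$ begins with $x_1$ after tightening), which is transparent from the explicit formulas for $g_w$ and the $g_i$ since every generator's image word begins with either $x_{k+1}$, $x_r$, $x_{r-1}$, or $x_1x_r/x_1x_{r-1}$-type prefixes — none producing $\ol{x_1}$ as an initial direction of any $Dg$-image.
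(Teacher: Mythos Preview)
Your proposal is correct and follows essentially the same approach as the paper: identify the periodic directions of $Dg$, delete the non-periodic vertex $\ol{x_1}$ from the local Whitehead graph computed in Lemma~\ref{l:WGs}, and read off the result. One small point worth making explicit is that the identification $\mathcal{IW}(\vphi_w)\cong SW(g)$ in Definition~\ref{d:WhiteheadGraphs} requires $g$ to be PNP-free---this is established in the proof of Lemma~\ref{l:fi}, and the paper's own proof flags it (``Since there are no PNPs\ldots'').
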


\vskip10pt

\begin{proof}
Since there are no PNPs, we have $\mathcal{IW}(\vphi_w)\cong SW(g)$. Since all directions apart from $\ol{x_1}$ are periodic, $SW(g)$ is the graph obtain from $LW(g)$ by removing the vertex for $\ol{x_1}$. The result then follows from Lemma \ref{l:WGs}.
\qedhere
\end{proof}

\vskip10pt

\begin{prop}\label{prop:LoneAxes} Suppose that $w(x_2, \dots, x_r)$ is a full positive word, starting with $x_{r-1}$ and ending with $x_2$.
Then $g=g_w\circ g_{12,1}$ represents a lone axis ageometric fully irreducible $\vphi_w\in \out$.
\end{prop}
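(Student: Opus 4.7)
The plan is to combine the previous lemmas with Theorem~\ref{t:uniqueaxis}. By Lemma~\ref{l:fi}, the map $g = g_w\circ g_{12,1}$ represents an ageometric fully irreducible outer automorphism $\vphi_w\in\out$, so it remains only to verify the two conditions from Theorem~\ref{t:uniqueaxis} for $\vphi_w$ to be a lone axis fully irreducible, namely (1) $i(\vphi_w) = \tfrac{3}{2} - r$ and (2) no component of $\mathcal{IW}(\vphi_w)$ contains a cut vertex.

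To verify (1), I would invoke Lemma~\ref{l:IWGs}, which identifies $\mathcal{IW}(\vphi_w)$ with the complete bipartite graph on the partition $\{\{x_1,\dots,x_r\},\{\ol{x_2},\dots,\ol{x_r}\}\}$. This graph has exactly $r + (r-1) = 2r-1$ vertices, so by the definition of the rotationless index in Definition~\ref{d:WhiteheadGraphs},
\[
i(\vphi_w) = 1 - \frac{2r-1}{2} = \frac{3}{2} - r,
\]
which gives (1).

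For condition (2), the ideal Whitehead graph is the complete bipartite graph $K_{r,\,r-1}$ on the two sides listed above. Since $r\ge 3$, both sides of the bipartition have cardinality at least $2$, and a standard observation about complete bipartite graphs $K_{a,b}$ with $a,b\ge 2$ is that they are $2$-connected and therefore contain no cut vertex: removing any single vertex $v$ from one side still leaves every vertex on the opposite side joined to all remaining vertices on $v$'s side, so the resulting graph is connected. In particular, $\mathcal{IW}(\vphi_w)$ is itself a single component with no cut vertex, establishing (2).

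With both conditions of Theorem~\ref{t:uniqueaxis} in place and $\vphi_w$ already known to be ageometric fully irreducible, the theorem immediately yields that $\vphi_w$ is a lone axis ageometric fully irreducible outer automorphism, as required. I do not anticipate any real obstacle here: essentially all the difficult content, in particular ruling out PNPs, has already been handled in Lemma~\ref{l:fi}, and Lemma~\ref{l:IWGs} has pinned down $\mathcal{IW}(\vphi_w)$ precisely. The only substantive check left is the elementary combinatorial fact about cut vertices in complete bipartite graphs, together with the arithmetic computation of the index.
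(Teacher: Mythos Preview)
Your proof is correct and follows essentially the same approach as the paper: invoke Lemma~\ref{l:fi} for ageometric full irreducibility, then use Lemma~\ref{l:IWGs} to compute that $\mathcal{IW}(\vphi_w)\cong K_{r,r-1}$ has $2r-1$ vertices (giving $i(\vphi_w)=\tfrac{3}{2}-r$) and no cut vertex since both sides have size $\ge 2$, and conclude via Theorem~\ref{t:uniqueaxis}.
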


\vskip10pt

\begin{proof}
By Lemma \ref{l:fi}, $\vphi$ is an ageometric fully irreducible outer automorphism.
We can thus use Theorem \ref{t:uniqueaxis} to prove that $\vphi$ has a lone axis.

By Lemma \ref{l:IWGs}, the ideal Whitehead graph has a single component, which has $2r-1$ vertices. Thus, $i(\vphi_w)=\frac{3}{2}-r$, and so Theorem \ref{t:uniqueaxis}(1) is satisfied. Notice also that Lemma \ref{l:IWGs} implies that $\mathcal{IW}(\vphi_w)$ is a complete bipartite graph which has at least 2 vertices in each set of the partition. Hence, the only component of $\mathcal{IW}(\vphi_w)$ has no cut vertices. This implies that Theorem \ref{t:uniqueaxis}(2) is also satisfied. So $\vphi_w$ has a lone axis, as desired.
\qedhere
\end{proof}

\vskip20pt

\section{Counting}

For the remainder of this paper $\log x$ will denote the natural logarithm of $x$.

\begin{df}[$\mathbf U(\vphi)$]\label{df:U}
For $r\ge 3$ and $\vphi\in \out$ fully irreducible, $\mathbf U(\vphi)$ will denote the set of all unmarked train track representatives $f:R_r\to R_r$ of $\vphi$, considered as combinatorial graph maps.
More precisely, $\mathbf U(\vphi)$ is the set of equivalence classes of train track representatives of $\vphi$ based on an $r$-rose, equivalent when they differ by a change in marking and possibly a graph homeomorphism. That is, if $(f:\G\to \G, \alpha)$ and $(f':\G'\to \G', \alpha')$ are train track representatives of $\vphi$ on $r$-roses $\G$ and $\G'$, with markings $\alpha$ and $\alpha'$, these representatives are considered equivalent if there exists a homeomorphism $q:\G'\to \G$ such that $f'=q^{-1}\circ f\circ q$. (The existence of $q$ means that $f$ and $f'$ are the same as  combinatorial graph maps.)
\end{df}

\begin{rk}
Note that the set $\mathbf U(\vphi)$ is possibly empty (since not every fully irreducible in $\out$ has a train track representative on $R_r$).  Moreover, $\mathbf U(\vphi)=\mathbf U(\psi^{-1} \vphi \psi)$ for each $\psi\in \out$. Lemma~\ref{lem:upper-bound} will imply that $\mathbf U(\vphi)$ is finite for every fully irreducible $\vphi\in \out$.
\end{rk}

\begin{rk}\label{rk:count}
Observe that if $\alpha,\beta\in \out$ are both fully irreducible and representable by train track maps on roses, then $\alpha$ is conjugate to $\beta$ in $\out$ if and only if $\mathbf U(\alpha)=\mathbf U(\beta)$, if and only if $\mathbf U(\alpha)\cap \mathbf U(\beta)\ne \varnothing$. Therefore, if we have a collection $S$ of $k\ge 1$ combinatorially distinct train track maps on $r$-roses representing fully irreducible outer automorphisms of $F_r$ and if $m\ge 1$ is such that each $f\in S$ represents $\vphi_f\in\out$ with $\#\mathbf U(\vphi_f)\le m$, then the collection $\{[\vphi_f]\mid f\in S\}$ contains $\ge k/m$ distinct $\out$-conjugacy classes of fully irreducibles.
\end{rk}

For a train track map $f: \G\to \G$ define $||f||:=\sum_{e\in E\G} |f(e)|$, where the summation is taken over all topological edges of $\G$, and where $|f(e)|$ is the combinatorial length of the path $f(e)$.

\begin{lem}\label{lem:multipl}
Let $f: R_r\to R_r$ be a train track map representing an ageometric lone axis fully irreducible $\vphi\in \out$, where $r\ge 3$.
Then
\[
\#\mathbf U(\vphi)\le ||f||.
\]
\end{lem}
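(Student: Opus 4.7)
The strategy is to use the uniqueness of the axis $A_\vphi$ --- provided by the lone axis hypothesis via Theorem~\ref{t:uniqueaxis} --- to associate each element of $\mathbf{U}(\vphi)$ to a rose-type open simplex of $\os$ visited by $A_\vphi$ within one fundamental domain of the $\vphi$-action, and then to bound the number of such simplices by the combinatorial complexity of a Stallings fold decomposition of $f$.

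First, I would fix a Stallings fold decomposition $f = q \circ \beta_n \circ \cdots \circ \beta_1$ of $f : R_r \to R_r$ in which each $\beta_i$ is a single-letter partial fold and $q$ is a graph homeomorphism. Labeling each intermediate edge by its $f$-image word, the total label length starts at $||f||$, ends at $r$ when the residual map becomes a homeomorphism, and decreases by exactly $1$ with each single-letter fold; hence $n = ||f|| - r$. Via Skora's realization (see Section~\ref{ss:StallingsFoldDecompositions}), this decomposition is one fundamental domain of the periodic fold line for $f$, which by the lone axis hypothesis coincides with $A_\vphi$. Under this parameterization, the fold line visits at most $n$ distinct open simplices per period.

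Second, I would invoke Theorem~\ref{t:uniqueaxis} to conclude that every train track representative on a rose of every positive power of $\vphi$ has periodic fold line equal to $A_\vphi$. Consequently, each $h \in \mathbf{U}(\vphi)$, together with any marking realizing $h$ as a representative of $\vphi$, determines a marked rose lying on $A_\vphi$, and hence sits in some rose simplex visited by $A_\vphi$; after translating by a suitable power of $\vphi$ (which permutes simplices cyclically along the axis without altering the underlying combinatorial self-map attached to the simplex), this rose simplex lies in the chosen fundamental domain. Sending $h$ to that rose simplex gives a well-defined injection of $\mathbf{U}(\vphi)$ into the set of rose simplices in one period, since distinct elements of $\mathbf{U}(\vphi)$ have distinct underlying combinatorial self-maps and hence (modulo the homeomorphism equivalence that defines $\mathbf{U}(\vphi)$) must sit in distinct marked rose simplices. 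Combining with the count of the first step yields $\#\mathbf{U}(\vphi) \le n = ||f|| - r < ||f||$.

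The most delicate step will be bookkeeping the interplay between the $\vphi$-action on markings and the underlying combinatorial representatives attached to simplices, so as to guarantee both that every element of $\mathbf{U}(\vphi)$ can be represented inside a single chosen fundamental domain and that distinct elements of $\mathbf{U}(\vphi)$ produce distinct rose simplices there. Here the lone axis hypothesis is essential: without uniqueness of the periodic fold line, elements of $\mathbf{U}(\vphi)$ arising from train track representatives whose fold lines are not $A_\vphi$ would be uncounted, and no bound purely in terms of $||f||$ could be established.
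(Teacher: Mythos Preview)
Your proof is correct and follows essentially the same route as the paper's: use the lone axis hypothesis (via Theorem~\ref{t:uniqueaxis}) to force every rose-based train track representative of $\vphi$ to lie on the unique periodic fold line, then bound the number of rose simplices visited in one period of that fold line by the combinatorial size of a Stallings fold decomposition of $f$.

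The only noteworthy differences are cosmetic. You decompose into single-letter folds and count $n=||f||-r$ of them, obtaining the slightly sharper bound $\#\mathbf U(\vphi)\le ||f||-r$; the paper instead uses maximal Stallings folds and asserts there are $||f||/2$ of them in a period, arriving at $\#\mathbf U(\vphi)\le ||f||/2$. Both are valid upper bounds for the number of simplices traversed, and either suffices for the stated inequality. You also phrase the final count as an injection $\mathbf U(\vphi)\hookrightarrow\{\text{rose simplices in one period}\}$, whereas the paper phrases it as the surjection in the other direction (``all elements of $\mathbf U(\vphi')$ arise from the $r$-roses along $\sigma$''); your injectivity justification---that at a fixed marked rose the topological representative of $\vphi$ is uniquely determined---is the contrapositive of the paper's implicit reasoning and is fine.
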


\begin{proof}
Choose in the $\out$ conjugacy class of $\vphi$ the outer automorphism $\vphi'\in \out$ that has a train track representative $g$ on the rose $R_r$ with the identity marking.
Thus $g=f$ as a map on the rose $R_r$, and the only difference between $g$ and $f$ is in modifying the marking.

Since having a lone axis is a conjugacy class invariant, $\vphi'$ also has a unique axis $\mA_{\vphi'}$.
By Theorem \ref{t:uniqueaxis}, $\mA_{\vphi'}$ is the periodic fold line obtained from each train track representative of $\vphi'$. Call by $\sigma$ the segment of $\mA_{\vphi'}$ starting at $R_r$ with the identity marking and consisting of a single Stallings fold decomposition of $g$ (a single period of the periodic fold line for $g$). The lone axis property of $\vphi'$, together with the periodicity of the line, imply that all elements of $\mathbf U(\vphi')$ arise from the $r$-roses that occur along $\sigma$.

Using the explanation of a Stallings fold decomposition given in Definition \ref{d:StallingsFoldDecomposition},  we see that a period consists of $||g||-r$ folds. Hence, $\sigma$ passes through at most $||g||-r+1\le ||g||\, $ $r$-roses. (Note that in the middle of a fold the underlying graph always has a trivalent vertex and is therefore never the $r$-rose).  Therefore, the element $\vphi'\in\out$ has at most $||g||$ unmarked train track representatives based on the rose $R_r$, as does its conjugate $\vphi$. Since as unmarked graph maps $g=f$, we have $||g||=||f||$. Hence $\#\mathbf U(\vphi)\le ||f||$, as claimed.
\end{proof}

\begin{thm}\label{thm:lower-bound}
Let $r\ge 3$. Then there exist constants $c=c(r)>1$ and $L_0\ge 1$ such that for each $L\ge L_0$ the number $\mathfrak{NA}_r(L)$ of distinct $\out$-conjugacy classes of ageometric lone axis fully irreducibles $\vphi\in \out$ with $\log \lambda(\vphi)\le L$ satisfies
\[
\mathfrak{NA}_r(L) \ge c^{e^L}.
\]
Therefore, $c^{e^L}$ bounds below the number of equivalence classes of closed geodesics in $\mathcal{M}_r$ of length bounded above by $L$.
\end{thm}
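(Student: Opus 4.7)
The plan is to realize the strategy outlined in the introduction by taking $\vphi_w$ to run over the lone axis ageometric fully irreducibles produced by Proposition~\ref{prop:LoneAxes}. Concretely, for each sufficiently large $L$ I would set $N=N(L):=\lfloor e^L / C_0\rfloor$ for a suitable constant $C_0=C_0(r)$ to be chosen, and let $\mathcal W_N$ denote the set of full positive words $w(x_2,\dots,x_r)$ of length $N$ that begin with $x_{r-1}$ and end with $x_2$. For each $w\in\mathcal W_N$, set $f_w:=g_w\circ g_{12,1}\colon R_r\to R_r$ and let $\vphi_w\in\out$ be the outer automorphism it represents. Proposition~\ref{prop:LoneAxes} then guarantees that every $\vphi_w$ is an ageometric lone axis fully irreducible, so Lemma~\ref{lem:multipl} applies to $f_w$.

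The proof then proceeds in four steps. First, a standard complement estimate shows that the number of length-$N$ positive words on $\{x_2,\dots,x_r\}$ missing at least one of the finitely many two-letter subwords $x_ix_j$ decays exponentially in $N$, while prescribing the first and last letter costs only a factor of $(r-1)^2$; hence $\#\mathcal W_N \ge \tfrac{1}{2}(r-1)^N$ for $N$ large. Second, since $g_{12,1}$ is a fixed composition of twelve positive Nielsen-type automorphisms, the image $g_{12,1}(e)$ of every edge of $R_r$ has combinatorial length at most a constant $K_1=K_1(r)$, while $g_w$ acts with long image only on $x_r\mapsto x_1 w$ of length $N+1$. Composing yields $\max_e |f_w(e)|\le K_1(N+1)$ and $\|f_w\|\le rK_1(N+1)$. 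Because $\lambda(\vphi_w)=\lambda(f_w)$ is bounded above by the maximum row sum of $M(f_w)$, one gets $\log\lambda(\vphi_w)\le \log N + O(1)$; choosing $C_0$ large enough makes $\log\lambda(\vphi_w)\le L$ for $L\ge L_0$. Third, Lemma~\ref{lem:multipl} gives $\#\mathbf U(\vphi_w)\le \|f_w\|\le C_1 e^L$. Fourth, distinct $w\in\mathcal W_N$ produce distinct marked graph maps $f_w$, since $w$ can be read off from $f_w$ by inspecting the $f_w$-image of the edge whose $g_{12,1}$-image contains $x_r$; passing to unmarked combinatorial equivalence classes (quotienting by graph homeomorphisms of $R_r$) loses at most a factor of $|\mathrm{Aut}(R_r)|$, which depends only on $r$.

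Combining these ingredients via Remark~\ref{rk:count}, the number of distinct $\out$-conjugacy classes in $\{[\vphi_w]\}_{w\in\mathcal W_N}$ is at least
\[
\frac{\#\mathcal W_N/|\mathrm{Aut}(R_r)|}{C_1 e^L} \;\ge\; \frac{(r-1)^N}{C_2 e^L} \;=\; \frac{(r-1)^{\lfloor e^L/C_0\rfloor}}{C_2 e^L}.
\]
Since $r\ge 3$ gives $r-1\ge 2$, any constant $c$ with $1<c<(r-1)^{1/C_0}$ satisfies $c^{e^L}\le (r-1)^{N}/(C_2 e^L)$ for all $L$ sufficiently large, yielding the desired lower bound $\mathfrak{NA}_r(L)\ge c^{e^L}$ and in particular the claimed bound on the number of equivalence classes of closed geodesics in $\mathcal{M}_r$.

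The conceptually hardest step, ensuring the lone axis property for every $f_w$, is encapsulated in Proposition~\ref{prop:LoneAxes} and used as a black box; inside that proposition the real work was done by Lemma~\ref{l:fi}'s PNP prevention analysis. Within the counting argument itself, the only delicate point is the fourth step above, ruling out systematic combinatorial coincidences among the $f_w$: the rigidity of $g_{12,1}$ as a fixed unmarked map and the finiteness of $\mathrm{Aut}(R_r)$ reduce this to a constant-factor loss and hence leave the double-exponential asymptotics intact.
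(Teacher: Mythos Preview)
Your proposal is correct and follows essentially the same approach as the paper's own proof: constructing $f_w=g_w\circ g_{12,1}$ for roughly $(r-1)^{e^L}$ full words $w$, invoking Proposition~\ref{prop:LoneAxes} for the lone axis property, bounding $\#\mathbf U(\vphi_w)$ by $\|f_w\|\le Ce^L$ via Lemma~\ref{lem:multipl}, and dividing out by the graph-automorphism and $\mathbf U$-multiplicity factors using Remark~\ref{rk:count}. The only cosmetic differences are that the paper takes words of length $e^L$ and absorbs the resulting $\log K$ shift in $\log\lambda$ by a change of variable $L_1=L+\log K$ at the end (whereas you preemptively scale $N=\lfloor e^L/C_0\rfloor$), and the paper verifies distinctness of the marked maps $f_w$ by arguing that $\vphi_w\ne\vphi_{w'}$ via $\psi_w([x_r])=[x_1w]\ne[x_1w']$ rather than by reading $w$ off the edge-image directly; both routes are valid. (One minor slip: with first and last letters prescribed the correct bound is $\#\mathcal W_N\ge\tfrac12(r-1)^{N-2}$, not $\tfrac12(r-1)^N$, but this constant factor is immaterial.)
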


\begin{proof}

Let $L\ge 1$. Let $Z_+(L)$ denote the set of all full positive words of length $e^L$ in $x_2,\dots, x_r$.

By the law of large numbers, the probability that a uniformly at random chosen positive word in $x_2,\dots, x_r$ of length $n$ is full tends to $1$ as $n\to\infty$. Therefore
\[\lim_{n\to\infty} \frac{\#\{w\mid w \text{ is a full positive word  in } x_2,\dots, x_r  \text{ of length } n\}}{(r-1)^{n}}=1\] and so $\lim_{L\to\infty} \frac{\#Z_+(L)}{(r-1)^{e^L}}=1$. In particular, there exists a sufficiently large $L_0'\ge 1$ such that for all $L\ge L_0'$ we have $\frac{\#Z_+(L)}{(r-1)^{e^L}}\ge 1/2$, that is $\#Z_+(L)\ge (r-1)^{e^L}/2$.

For each such word  $z\in Z_+(L)$, the word $w=x_{r-1}zx_{2}$ is also a full positive word and begins in $x_{r-1}$ and ends in $x_2$.
Define $W_+(L):=\{x_{r-1}zx_{2}\mid z\in Z_+(L)\}$. Thus $\#W_+(L) \ge (r-1)^{e^L}/2$ for each $L\ge L_0$, and for each $w\in W_+(L)$ we have $|w|=e^L+2$.

For each $w\in W_+(L)$ consider the train track map $f_w=g_w\circ g_{12,1}$ as in Proposition~\ref{prop:LoneAxes} above. Proposition~\ref{prop:LoneAxes} implies $f_w$ represents an ageometric lone axis fully irreducible $\varphi_w\in \out$.  We claim that for $w,w'\in W_+(L)$, we have $\vphi_w=\vphi_{w'}$ if and only if $w=w'$. Indeed, suppose, on the contrary, that $w,w'\in W_+(L)$ are distinct words, but that $\vphi_w=\vphi_{w'}$. Denote by $\psi_w$, $\psi_{w'}$, and $\beta$ the elements of $\out$ represented by $g_w, g_{w'}$, and $g_{12,1}$ respectively. We have $\vphi_w=\vphi_{w'}=\psi_w\beta=\psi_{w'}\beta$ and therefore $\psi_w=\psi_{w'}=\vphi_w\beta^{-1}=\vphi_{w'}\beta^{-1}$. By definition, $\psi_w([x_r])=[x_1 w(x_2,\dots, x_r)]$ and $\psi_{w'}([x_r])=[x_1 w'(x_2,\dots, x_r)]$. (Here for $u\in F_r$ we denote by $[u]$ the conjugacy class of $u$ in $F_r$). Since by assumption $w\ne w'$ are distinct positive words in $x_2,\dots, x_r$, the words $x_1 w(x_2,\dots, x_r)$ and $x_1 w'(x_2,\dots, x_r)$ are distinct positive words in $x_1,\dots, x_r$, which are cyclically reduced and therefore not conjugate in $F_r=F(x_1,\dots, x_r)$. This contradicts the assumption that $\vphi_w=\vphi_{w'}$. Thus the claim is verified, and we know that distinct words $w\in W_+(L)$ define distinct outer automorphisms $\varphi_w\in \out$. Since for each $w\in W_+(L)$ we have that $f_w:R_r\to R_r$ is a train track representative of $\vphi_w$, it follows that distinct words in $w\in W_+(L)$ produce distinct marked train track maps $f_w:R_r\to R_r$, where $R_r$ is taken with the identity marking. Two such maps $f_w$ and $f_{w'}$ can still be equivalent, in the sense of Definition~\ref{df:U}, if they are conjugate by a graph automorphism of $R_r$. There are $m=2^rr!$ simplicial automorphisms of $R_r$. We thus have, for each $L\ge L_0'$, a collection $\{f_w:R_r\to R_r\mid w\in W_+(L)\}$ of at least $ (r-1)^{e^L}/(2m)$ combinatorially distinct (in the sense of Definition~\ref{df:U}) train track maps.

For $w\in W_+(L)$ we have $||g_w||=r+|w|=r+2+e^L$. Since $g_{12,1}$ is fixed and does not depend on $w$ or $L$, there exists a constant $K=K(r)\ge 1$ such that $||f_w||\le Ke^{L}$.  By Lemma~\ref{lem:multipl}, for each $w\in W_+(L)$, we have $\#\mathbf U(\varphi_w)\le ||f_w||\le Ke^{L}$.

Then by Remark~\ref{rk:count}, for $L\ge L_0'$,  the number of distinct $\out$ conjugacy classes represented by $\{f_w\mid w\in W_+(L)\}$ is
\[
\ge\frac{(r-1)^{e^L}}{2mKe^L}\ge_{L\to\infty} (r-1.5)^{e^L}.
\]

For each $w\in W_+(L)$ we have $||f_w||\le Ke^{L}$, and therefore $\lambda(f_w)\le ||f_w||\le K e^L$. (Here we are using the fact that the Perron-Frobenius eigenvalue $\lambda(f_w)$ is bounded above by the maximum of the row-sums of the transition matrix $M(f_w)$; see~\cite{Seneta} for details.) Hence \[\log\lambda(\varphi_w)=\log \lambda(f_w)\le L+\log K.\]

Now let $L_1=L+\log K$.  Then from above we have $\log\lambda(\varphi_w)=\log \lambda(f_w)\le L+\log K=L_1$. Also, the number of distinct $\out$ conjugacy classes represented by $\{f_w\mid w\in W_+(L)\}$ is
\[
\ge_{L\to \infty} (r-1.5)^{e^L}= (r-1.5)^{e^{L_1-\log K}}= \left((r-1.5)^{1/K}\right)^{e^{L_1}},
\]which completes the proof of the main statement.

The final sentence of the theorem follows from the fact that the translation distance of $\vphi$ along $A_{\vphi}$ is $log(\lambda(\vphi))$.
\end{proof}

\begin{lem}\label{lem:upper-bound}
Let $r\ge 2$. Then there exist $a>1,b>1$ such that for each $L\ge 1$ the number of expanding irreducible train track maps $f:\G\to \G$, where $b_1(\G)=r$ and $\log \lambda(f)\le L$, is $\le a^{b^L}$. 

In particular, for a fully irreducible $\varphi\in\out$, the cardinality of $U(\varphi)$ has a double exponential upper bound in $\log(\lambda(\varphi))$, namely $\#\mathbf U(\phi)\le a^{b^L}$, where $L=\log(\lambda(\varphi))$.
\end{lem}

\begin{proof}
This proof follows the argument in \cite[Remark~3.3]{hk17}.
First note that there are only finitely many choices for a finite connected graph $\G$ satisfying that all vertices have degree $\ge 3$ and $b_1(\G)=r$. Thus we may assume $\G$ is fixed. Let $k=\#E\G$ and let $M=(m_{ij})_{ij=1}^k$ be $M(f)$.
By \cite[Proposition~A.4]{kb16}, if $f$ is as above and $\lambda:=\lambda(f)$, then
$\max m_{ij}\le k \lambda^{k+1}$. If $\log \lambda\le L$, we have $\max \log m_{ij} \le \log k + (k+1) L$ and $\max m_{ij}\le k e^{(k+1)L}$. Thus we obtain exponentially many (in terms of $L$) possibilities for transition matrices $M(f)$. Since for a given length $s$ there are exponentially many paths of length $s$ in $\G$, we obtain a double exponential upper bound for the cardinality of the set $V(L)$ of all expanding irreducible train track maps $f:\G\to \G$ with $\log \lambda(f)\le L$, as required. Since for all fully irreducible $\varphi\in\out$ with $\log(\lambda(\varphi))=L$ we have $\mathbf U(\varphi) \subseteq V(L)$, it follows that the cardinality of $\mathbf U(\varphi)$ has a double exponential bound in $log(\lambda(\varphi))=L$ as well.  
\end{proof}

We can now establish the main result of this paper, Theorem~\ref{thm:main} from the Introduction:

\begin{thm}
For each integer $r\ge 3$, there exist constants $a=a(r)>1,b=b(r)>1, c=c(r)>1$ so that: For $L\ge 1$, let $\mathfrak N_r(L)$ denote the number of $\out$-conjugacy classes of fully irreducibles $\vphi\in \out$ with $\log\lambda(\vphi)\le L$. Then there exists an $L_0\ge 1$  such that for each $L\ge L_0$ we have
\[
c^{e^L}\le \mathfrak N_r(L)\le  a^{b^L}.
\]
Therefore, $c^{e^L}$ bounds below the number of equivalence classes of closed geodesics in $\mathcal{M}_r$ of length bounded above by $L$.
\end{thm}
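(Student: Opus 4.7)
The plan is to assemble the two preceding results, Theorem~\ref{thm:lower-bound} and Lemma~\ref{lem:upper-bound}, which already contain essentially all of the work. The lower bound is immediate: every ageometric lone axis fully irreducible is in particular fully irreducible, so $\mathfrak N_r(L) \ge \mathfrak{NA}_r(L)$. Applying Theorem~\ref{thm:lower-bound} produces a constant $c=c(r)>1$ and a threshold $L_0\ge 1$ such that $\mathfrak{NA}_r(L) \ge c^{e^L}$ for all $L\ge L_0$, which is exactly the desired lower bound. No new argument is required on this side.

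For the upper bound, I would first recall that by Bestvina--Handel \cite{bh92} every fully irreducible $\vphi \in \out$ admits at least one expanding irreducible train track representative $f\colon \Gamma \to \Gamma$ on some marked graph $\Gamma$ with $b_1(\Gamma)=r$, and that $\lambda(\vphi)=\lambda(f)$. The key step is to note that passing from a marked train track map to its underlying unmarked combinatorial graph map identifies exactly the $\out$-conjugate representatives: changing the marking of $\Gamma$ by a homotopy self-equivalence turns the represented outer automorphism into a conjugate one. Hence if for each $\out$-conjugacy class $[\vphi]$ of fully irreducibles with $\log\lambda(\vphi)\le L$ we pick any one unmarked combinatorial train track map representing some element of $[\vphi]$, distinct conjugacy classes produce distinct unmarked maps. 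This injectivity, combined with $\lambda(\vphi)=\lambda(f)$, reduces the upper bound on $\mathfrak N_r(L)$ to a count of unmarked train track maps $f\colon \Gamma\to\Gamma$ with $b_1(\Gamma)=r$ and $\log\lambda(f)\le L$. By Lemma~\ref{lem:upper-bound}, this count is bounded by $a^{b^L}$ for constants $a=a(r)>1$ and $b=b(r)>1$, giving $\mathfrak N_r(L)\le a^{b^L}$. The closing sentence about equivalence classes of closed geodesics in $\mathcal M_r$ is then inherited verbatim from Theorem~\ref{thm:lower-bound}, using that $\log\lambda(\vphi)$ equals the translation length of $\vphi$ along its axis.

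Since the heavy lifting is done in Theorem~\ref{thm:lower-bound} (the lone-axis construction via the maps $g_w$ and $g_{12,1}$ together with the PNP-prevention argument) and in Lemma~\ref{lem:upper-bound} (a Perron--Frobenius estimate on the size of entries of $M(f)$), I do not expect any genuine obstacle in the final assembly. The only point requiring a modicum of care is phrasing the injectivity of the assignment $[\vphi]\mapsto f_{[\vphi]}$ so that what is being counted by Lemma~\ref{lem:upper-bound} really does bound the number of $\out$-conjugacy classes rather than the (possibly much larger) set of individual outer automorphisms.
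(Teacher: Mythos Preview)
Your proposal is correct and follows essentially the same route as the paper: the lower bound is read off directly from Theorem~\ref{thm:lower-bound}, and the upper bound is obtained by representing each fully irreducible by an expanding irreducible train track map with $\lambda(\vphi)=\lambda(f)$ and then invoking Lemma~\ref{lem:upper-bound}. The only difference is that you spell out the injectivity of the assignment $[\vphi]\mapsto$ (unmarked train track map), which the paper leaves implicit; this is a helpful clarification but not a new idea.
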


\begin{proof}
The lower bound follows directly from Theorem~\ref{thm:lower-bound}. Since every fully irreducible $\vphi\in \out$ can be represented by an expanding irreducible  train track map $f:\G\to\G$ with $\lambda (\vphi)=\lambda(f)$, the upper bound follows from Lemma~\ref{lem:upper-bound}.
\end{proof}

\begin{thm}\label{thm:main2'}
Let $r\ge 2$ be an integer.
Then there exists a constant $p=p(r)>1$ such that
\[
\#\{ \lambda(\vphi) | \vphi\in\out \text{ is fully irreducible with } \log\lambda(\vphi)\le L \} \le p^L.
\]
\end{thm}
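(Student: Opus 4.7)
The plan is to show that each stretch factor $\lambda(\vphi)$ satisfying $\log \lambda(\vphi) \le L$ is a root of an integer polynomial of bounded degree whose coefficients are bounded exponentially in $L$, and then to count such polynomials. First I would use that every fully irreducible $\vphi \in \out$ admits an expanding irreducible train track representative $f \colon \G \to \G$ with $b_1(\G) = r$, all vertices of $\G$ of valence $\ge 3$, and $\lambda(f) = \lambda(\vphi)$. Since there are only finitely many combinatorial isomorphism classes of such graphs $\G$, one may fix one and perform the count combinatorial type by combinatorial type, letting $k := \#E\G$; the bound on $k$ depends only on $r$.

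Next, I would reuse the entrywise inequality $\max_{i,j} m_{ij} \le k \lambda(f)^{k+1}$ of \cite[Proposition~A.4]{kb16}, already invoked in the proof of Lemma~\ref{lem:upper-bound}. If $\log \lambda(\vphi) \le L$, then every entry of the non-negative integer matrix $M(f) = (m_{ij})$ is at most $k e^{(k+1)L}$. The characteristic polynomial $\chi_{M(f)}(t) \in \Z[t]$ has degree $k$, and its coefficient of $t^{k-j}$ is, up to sign, the sum of the $j \times j$ principal minors of $M(f)$; hence each coefficient has absolute value at most $\binom{k}{j} j! \, (k e^{(k+1)L})^j \le C_1 e^{C_2 L}$ for constants $C_1 = C_1(r)$ and $C_2 = C_2(r)$. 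Therefore the number of possible characteristic polynomials is bounded by $(2 C_1 e^{C_2 L} + 1)^{k+1}$, which is at most $q^L$ for some $q = q(r) > 1$ and all sufficiently large $L$.

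Finally, $\lambda(\vphi)$ is one of the at most $k$ complex roots of $\chi_{M(f)}$, so the number of distinct stretch factors $\lambda(\vphi)$ with $\log \lambda(\vphi) \le L$ is bounded by $k \cdot q^L$, hence by $p^L$ for an appropriate $p = p(r) > 1$. The argument is uniform in $r \ge 2$ since neither the bound on $\#E\G$ nor the matrix-entry inequality depend on any lone-axis or ageometric hypothesis. I do not anticipate a real obstacle: the main care needed is bookkeeping, i.e. absorbing the finitely many combinatorial graph types, the polynomial prefactor $k$, and the multiplicative constant $C_1^{k+1}$ into the exponential base $p$, and choosing $L$ large enough that the additive $+1$ inside the polynomial count is negligible.
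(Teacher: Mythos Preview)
Your proof is correct and follows essentially the same approach as the paper: both use the entry bound $\max_{i,j} m_{ij} \le k\, e^{(k+1)L}$ from the proof of Lemma~\ref{lem:upper-bound} together with the uniform bound on $k=\#E\G$ in terms of $r$. The paper's argument is marginally more direct---it simply counts the (exponentially many in $L$) possible integer transition matrices $M(f)$ rather than passing to their characteristic polynomials---but the substance is identical.
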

\begin{proof}
As shown in the proof of Lemma~\ref{lem:upper-bound},  if $f:\G\to\G$ is an expanding irreducible train track map representing some $\vphi\in\out$ with $\log\lambda(\vphi)=\log\lambda(f)\le L$, then for the coefficients $m_{ij}$ of
$M(f)$ we have $\max m_{ij}\le k e^{(k+1)L}$, where $k=\#E\G$. For a fixed $r\ge 2$, since $\pi_1(\G)\cong F_r$ and each vertex of $\G$ has degree $\ge 3$, we have $k\le 3r-3$.  Thus the number $k$ of rows/columns of $M(f)$ is bounded by $3r-3$. Therefore there is a constant $p=p(r)>1$ such that for $L\ge  1$ the number of possible transition matrices $M(f)$ is bounded above by $p^L$.
Hence, for $L\ge 1$,
\[
\#\{ \lambda(\vphi) \mid \vphi\in\out \text{ is fully irreducible with } \log\lambda(\vphi)\le L \} \le p^L.
\]
\end{proof}

\section{Questions}\label{sec:Q}

Define a function $\omega:[0,\infty)\to [0,\infty)$ to have \emph{double exponential asymptotics} if there exist numbers $a>1, b>1, c>1,d>1$ and $t_0\ge 1$ such that for all $t\ge t_0$,
\[
c^{d^t}\le \omega(t) \le a^{b^t}.
\]
Describing the precise asymptotics of such functions appears to be a nontrivial analytic problem. As an initial approach, for a function $\omega(t)$ with  double exponential asymptotics we define the \emph{principal entropy} $b=b(\omega)>1$ as
\[
\log b:=\limsup_{t\to\infty} \frac{\log \log \omega(t)}{t}. \tag{$\dag$}
\]
Note that if $\omega(t)=a^{b^t}$, for constants $a>1,b>1$, then the principal entropy of $\omega$ is exactly $b$.

Now, if  $\omega(t)$ is a function with double exponential asymptotics and with principal entropy $b=b(\omega)$, we define the \emph{secondary entropy} $a=a(\omega)$ as
\[
\log a:=\limsup_{t\to\infty} \frac{\log \omega(\log_b t)}{t}.
\]
Again, if  $\omega(t)=a^{b^t}$, where $a>1,b>1$ are constants, then the secondary entropy of $\omega$ is exactly $a$.

Recall that $\mathfrak N_r(L)$ is the number of $\out$-conjugacy classes of fully irreducibles $\vphi\in \out$ with $\log\lambda(\vphi)\le L$.

\begin{qst}
Let $r\ge 3$ and $\omega(L)=\mathfrak N_r(L)$.
\begin{enumerate}
\item What is the principal entropy $b(\omega)$ of $\omega$?
\item Does $b(\omega)$ depend on $r$?
\item Is it true that $b(\omega)=e$? (Theorem~\ref{thm:lower-bound} does imply that $b(\omega)\ge e$.)
\item Does the actual limit exist for $\omega$ in $(\dag)$ in this case?
\item What is the secondary entropy $a(\omega)$? Does it depend on $r$ and how?
\end{enumerate}
\end{qst}

\vskip1pt

\subsection*{Acknowledgements}

We are grateful to Yael Algom-Kfir, Michael Hull, and Paul Schupp for useful discussions. We also thank Kasra Rafi for letting us know about his result with Bestvina mentioned above. We thank the referee for a careful reading of the paper and for useful suggestions.

\subsection*{Funding}
The first author was supported by the individual NSF
  grants DMS-1710868 and DMS-1905641. The second author was supported by research funds granted via a Ky Fan Visiting Assistant Professorship, a Queen's University Research Initiation Grant, and an NSERC Discovery Grant and Discovery Launch Supplement. Both authors acknowledge support from U.S. National Science Foundation grants DMS 1107452, 1107263, 1107367 "RNMS: GEometric structures And Representation varieties" (the GEAR Network).

\subsection*{Conflicts of interest}

The authors declare that they have no affiliations with or involvement in any organization or entity with any financial interest or non-financial interest in the subject matter or materials discussed in this article.

\subsection*{Authors contribution}

Both authors contributed equally to the research and preparation of this article.

\subsection*{Data availability}

Data sharing is not applicable to this article as no datasets were generated or analyzed during the preparation of this article.

\bibliographystyle{alpha}
\bibliography{References}

\end{document}